\newtheorem{thm}{Theorem}[section]
\newtheorem{lem}[thm]{Lemma}
\newtheorem{prp}[thm]{Proposition}
\theoremstyle{definition}
\newtheorem{rem}[thm]{Remark}
\newcommand{\scr}[1]{\mathscr #1}
\definecolor{wco}{rgb}{0.5,0.2,0.3}
\numberwithin{equation}{section} \theoremstyle{remark}
\newcommand{\ua}{\uparrow}
\title{{\bf Existence of invariant probability measures for functional McKean-Vlasov SDEs} \footnote{Supported in
 part by  NNSFC (11771326, 11831014, 12071340).} }
\author{
{\bf  Jianhai Bao\footnote{Center for Applied Mathematics, Tianjin
University, Tianjin 300072, China; jianhaibao@tju.edu.cn}, Michael Scheutzow\footnote{Institut f\"ur Mathematik, MA 7-5, Fakult\"at II, Technische Universit\"at Berlin,  Stra\ss e  des 17. Juni 136, 10623  Berlin, Germany; ms@math.tu-berlin.de }, Chenggui Yuan\footnote{Department of Mathematics, Swansea University, Bay Campus SA1 8EN, UK; C.Yuan@swansea.ac.uk} }\\
}
\date{}
\begin{document}
\allowdisplaybreaks
\def\R{\mathbb R}  \def\ff{\frac}  \def\B{\mathbf
B}
\def\N{\mathbb N} \def\kk{\kappa} \def\m{{\bf m}}
\def\ee{\varepsilon}\def\ddd{D^*}
\def\dd{\delta} \def\DD{\Delta} \def\vv{\varepsilon} \def\rr{\rho}
\def\<{\langle} \def\>{\rangle} \def\GG{\Gamma} \def\gg{\gamma}
  \def\nn{\nabla} \def\pp{\partial} \def\E{\mathbb E}
\def\d{\text{\rm{d}}} \def\bb{\beta} \def\aa{\alpha} \def\D{\scr D}
  \def\si{\sigma} \def\ess{\text{\rm{ess}}}
\def\beg{\begin} \def\beq{\begin{equation}}  \def\F{\scr F}
\def\Ric{\text{\rm{Ric}}} \def\Hess{\text{\rm{Hess}}}
\def\e{\text{\rm{e}}} \def\ua{\underline a} \def\OO{\Omega}  \def\oo{\omega}
 \def\tt{\tilde} \def\Ric{\text{\rm{Ric}}}
\def\cut{\text{\rm{cut}}} \def\P{\mathbb P} \def\ifn{I_n(f^{\bigotimes n})}
\def\C{\scr C}   \def\G{\scr G}   \def\aaa{\mathbf{r}}     \def\r{r} \def\CC{\mathcal{C}}
\def\gap{\text{\rm{gap}}} \def\prr{\pi_{{\bf m},\varrho}}  \def\r{\mathbf r}
\def\Z{\mathbb Z} \def\vrr{\varrho} \def\ll{\lambda}
\def\L{\scr L}\def\Tt{\tt} \def\TT{\tt}\def\II{\mathbb I}
\def\i{{\rm in}}\def\Sect{{\rm Sect}}  \def\H{\mathbb H}
\def\M{\scr M}\def\Q{\mathbb Q} \def\texto{\text{o}} \def\LL{\Lambda}
\def\Rank{{\rm Rank}} \def\B{\scr B} \def\i{{\rm i}} \def\HR{\hat{\R}^d}
\def\to{\rightarrow}\def\l{\ell}\def\iint{\int}
\def\EE{\scr E}\def\no{\nonumber}
\def\A{\scr A}\def\V{\mathbb V}\def\osc{{\rm osc}}
\def\BB{\mathbb B}\def\Ent{{\rm Ent}}\def\W{\mathbb W}
\def\U{\scr U}\def\8{\infty} \def\si{\sigma}
\def\33{\interleave}
\def\M{\mathcal{M}}
\def\K{\mathcal{K}}
\newcommand{\1}{\mathbbm{1}}
\renewcommand{\bar}{\overline}
\renewcommand{\tilde}{\widetilde}
\maketitle
\begin{abstract}
  We show existence of an invariant probability measure for a class of functional McKean-Vlasov SDEs by applying Kakutani's fixed point theorem to a suitable class of probability measures on a space of continuous functions.
  Unlike some previous works \cite{AD,Wangb},  we do not assume a monotonicity condition to hold.
  Further, our conditions are even weaker than some results in the literature on invariant probability measures for functional SDEs 
  without dependence on the law of the solution \cite{ESV}.

\end{abstract} \noindent
{\bf  AMS subject Classification 2020}:\  60J60, 47D07.   \\
\noindent
{\bf  Keywords}: Functional McKean-Vlasov SDE, invariant probability measure,  Kakutani's fixed point theorem.

 \section{Introduction and main result}
 Many classical functional SDEs generate  Markov processes on a space of functions and it is natural to ask for the existence and uniqueness of invariant probability measures (IPMs) of these processes. A typical way to establish existence of an IPM of an SDE with or without delay is to show boundedness in probability of the solutions and then to apply the Krylov-Bogoliubov theorem (see e.g. \cite[Theorem 3.1.1]{DZ96}). Boundedness in probability is usually guaranteed by constructing a suitable
 {\em Lyapunov} function (sometimes called  Veretennikov-Khasminskii condition); see, for example, \cite{But,BS,HMS,Sch}. Some authors establish the existence of IPMs without using the Krylov-Bogoliubov theorem: \cite{BYY} uses the remote start method  and \cite{Sch} uses non-degeneracy and recurrence properties of the Markov process.


 If the transition kernels starting from different initial distributions are contractive under a Wasserstein metric or the total variation metric, then the classical Banach fixed point theorem yields  that the associated transition semigroup has a unique IPM; see, for example, \cite{HMS}. Such conditions are however too strong if one is only interested in the existence of an IPM.

 Over the past few decades, McKean-Vlasov SDEs (with or without delay) have gained considerable attention. They are applied in the social sciences, economics, neuroscience, engineering, and finance; see, for example, \cite{MSSZ}
 or the monograph \cite{CD}. They  often appear as limiting equations for $N$ weakly interacting particles when $N \to \infty$. A McKean-Vlasov SDE is an SDE whose coefficients depend not only on the state of the process
 but also on its distribution. It is of the form 
 \begin{equation}\label{E0}
 \d X(t)=b(X(t),\L_{X(t)})\d t+\si(X(t),\L_{X(t)})\d W(t),
\end{equation}
 where $\L_{X(t)}$ denotes the distribution of $X(t)$ and $(W(t))$ is an $m$-dimensional Brownian motion.
 In contrast to classical SDEs, the distribution of the McKean-Vlasov SDE \eqref{E0} solves a {\em nonlinear} Fokker-Planck equation and     the solution process of \eqref{E0}  is a {\it nonlinear Markov process in the sense of McKean} (but not a classical linear Markov process). Further, there is no associated (linear) semigroup but the distribution flow $P_t^*\mu:=\L_{X_t^\mu}$ generates a nonlinear semigroup. Therefore, the approaches taken in e.g. \cite{BYY,ESV,HMS,KW} to address existence of IPMs for linear Markov processes (or semigroups) cannot be applied to  the McKean-Vlasov SDE \eqref{E0}.


 Let us briefly review some previous  approaches to show existence of IPMs for McKean-Vlasov SDEs.
 By applying the generalized Banach fixed-point theorem (see e.g. \cite[Theorem 9.A, p449]{Zeidler}), \cite[Theorem 3]{AD} investigated existence of IPMs for
 semi-linear McKean-Vlasov stochastic partial differential equations (SPDEs for short). By employing the shift coupling approach and using contractivity under
 Wasserstein distance, \cite[Theorem 3.1]{Wangb} discussed existence of IPMs for the McKean-Vlasov SDE \eqref{E0}. For further extensions to functional
 McKean-Vlasov SPDEs, we refer to \cite[Theorem 2.3]{RW}.  
 The existence of IMPs for semigroups on $C_b(P_2(\R^d))$ was investigated in \cite[Theorem 4.1]{HSS} for McKean-Vlasov SDEs under measure dependent Lyapunov
 conditions.

 Some authors assume a monotonicity condition to show existence and uniqueness of an IPM, e.g.\cite{AD,Wangb}. Such a condition is, however, very strong if one
 is only interested in the  existence of an IPM.
 Motivated by the previous works,   in this paper, we shall focus on a functional McKean-Vlasov SDE (see \eqref{E1} below), which is much more general than the McKean-Vlasov SDE \eqref{E0},   and investigate existence of IPMs under much weaker conditions than those imposed in \cite{AD,Bu,Wangb}. In particular, we will not assume a monotonicity condition.

 Let us point out that we will not address the question of uniqueness of IPMs which clearly requires stronger assumptions than those in our main result.

Before we present our set-up and results, let us introduce  some notation.  For a subinterval $U\subset \R$,   $C(U;\R^d)$  stands for the collection of all continuous functions $f:U\to \R^d$. For a fixed  finite number $r_0>0,$ set $\C:=C([-r_0,0];\R^d)$, which is endowed with the uniform norm $$\|f\|_\8:=\sup_{-r_0\le\theta\le0 }|f(\theta)|,~~~f\in\C.$$
   For  $f\in C([-r_0,\8);\R^d)$ and $t\ge0$, let $f_t\in\C$ be defined by $f_t(\theta)=f(t+\theta), \theta\in[-r_0,0]$. Often, $(f_t)_{t\ge0}$ is called the {\em segment} (or {\em window}) process corresponding to  $(f(t))_{t\ge-r_0}$.
For each $p\ge 1,$ let $\mathscr P_p(\C)$ be the set of all probability measures on $\C$, denoted by $\mathscr P(\C)$,  with finite $p$-th moment, i.e.,
 $$\mathscr P_p(\C) =\bigg\{\mu\in\mathscr P(\C)\bigg|\,  \mu\big(\|\cdot\|_\8^p\big):=\int_\C\big\|\xi\big\|_\8^p\mu(\d\xi)   <\8\bigg\},$$
 which is a Polish space (see e.g. \cite[Theorem 6.18]{Villani}) under the Wasserstein distance
 $$\mathbb W_p(\mu,\nu)=\inf_{\pi\in\mathcal C(\mu,\nu)}\bigg(\int_{\C\times\C}\|\xi-\eta\|_\infty^p\pi(\d\xi,\d\eta)\bigg)^{\ff{1}{p}}, $$
where $\mathcal C(\mu,\nu)$ stands for the set of all probability measures on $\C\times\C$ with marginals $\mu$ and $\nu$, respectively.
 For a random variable $\xi$, we denote its law by $\L_{\xi}$ and we write $\xi\sim \mu$ if $\mu=\L_{\xi}$.

 We fix $p\ge 2$ and consider the following functional   McKean-Vlasov  SDE
 \begin{equation}\label{E1}
 \d X(t)=b(X_t,\L_{X_t})\d t+\si(X_t,\L_{X_t})\d W(t),~~~t\ge0,~~~X_0\sim\mu\in\mathscr P_p(\C),
 \end{equation}
 where $$b:\C\times\mathscr P_p(\C)\to\R^d, \quad \si:\C\times\mathscr P_p(\C)\to\R^d\otimes\R^m,$$ and $(W(t))_{t\ge0}$ is an $m$-dimensional Brownian motion on the  probability space $(\OO,\F, \P)$ with the filtration $(\F_t)_{t\ge0}$.
We will assume the following hypotheses.
\begin{enumerate}
\item[$({\bf H_1})$] $b$ and $\si$ are continuous and bounded    on bounded subsets
of $\C\times\mathscr P_p(\C)$;
\item[$({\bf H_2})$]  There is  a constant   $K> 0$ such that \begin{align*}
2\<\xi(0)-\eta(0),b(\xi,\mu)-b(\eta,\nu)\>^++\|\si(\xi,\mu)-\si(\eta,\nu)\|_{\rm HS}^2
\le K\big\{\|\xi-\eta\|_\8^2 +  \mathbb W_p(\mu,\nu)^2\big\}
\end{align*}
for any $ \xi,\eta\in\C, \mu,\nu\in\mathscr P_p(\C).$
\end{enumerate}

We will be interested in  the existence
of an IPM $\pi \in \mathscr P_p(\C)$ of \eqref{E1}, i.e.~a probability measure $\pi$ for which the functional solution $(X_t)_{t\ge0}$ of
$$
\d X(t)=b(X_t,\pi)\d t+\si(X_t,\pi)\d W(t),~~~t\ge0,~~~X_0\sim\pi,
$$
satisfies $\L_{X_t}=\pi$ for every $t\ge0$. In this case we say that \eqref{E1} (or $(X_t)_{t\ge0}$) {\em admits an IPM}.

 To investigate existence
of an IPM we do not need to know whether  \eqref{E1} has a unique (strong) solution for every initial condition but we still mention that it has been shown in  \cite[Lemma 3.1]{BRW} that
under $({\bf H_1})$ and $({\bf H_2})$,    for any $X_0\in L^p(\OO\to\C,\F_0,\P)$ with $p\ge2$, \eqref{E1} has a unique functional solution $(X_t)_{t\ge0}$ and that there exists a nondecreasing positive function  $T\mapsto C_T $ such that
  \begin{equation*}
  \E\Big(\sup_{0\le t\le T}\|X_t\|_\8^p\Big)\le C_T\E\|X_0\|_\8^p,~~~T>0.
  \end{equation*}

 Clearly, hypotheses $({\bf H_1})$ and $({\bf H_2})$ are insufficient to guarantee  existence of an IPM for \eqref{E1}.
Therefore, we impose the following additional condition which
  guarantees that the drift $b$ pushes solutions towards {\bf0} whenever $\|X_t\|_\infty$ is large. 
 \begin{enumerate}
\item[$({\bf H_3})$] There exist a constant  $\ll_1>0$ and  constants $\ll_0,\ll_i\ge0, i=2,\cdots,5,$ such that
\begin{align}
2\<\xi(0),b(\xi,\mu)\>&\le\ll_0-\ll_1|\xi(0)|^2+\ll_2\|\xi\|_\8^2+\ll_3\W_p(\mu,\dd_{\bf0})^2,\label{W4}\\
\|\si(\xi,\mu)\|_{\rm HS}^2&\le\ll_0+ \ll_4\|\xi\|_\8^2+\ll_5\W_p(\mu,\dd_{\bf0})^2\label{W5}
\end{align}
 for any $\xi\in\C$ and $\mu\in\mathscr P_p(\C)$.
\end{enumerate}
Note that \eqref{W5} actually follows from $({\bf H_2})$ (for suitable $\lambda_0$, $\lambda_4$ and $\lambda_5$).

In the sequel, we will frequently use the following one-sided version of the Burkholder-Davis-Gundy inequality which is a special case of \cite[Theorem 4.1.(ii)]{ose}.
\begin{prp}\label{ose}
  For any continuous martingale $M$ satisfying $M(0)=0$ and for any $t \ge 0$, we have
  $$
\E\Big( \sup_{0 \le s \le t} M(s)\Big)\le \chi\, \E\big( [M,M]_t^{1/2} \big),
$$
where $\chi$ $($called $\nu_1$ in \cite[Theorem 4.1.$($ii$)$]{ose}$\,)$ is the smallest positive root of the confluent hypergeometric function with parameter 1.
\end{prp}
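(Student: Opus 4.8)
The plan is to obtain the inequality first with some finite constant and then to recover the sharp value $\chi$ via the Bellman function method of \cite{ose}. Write $S(t):=\sup_{0\le s\le t}M(s)$, which is nonnegative since $M(0)=0$, and $A(t):=[M,M]_t$; the assertion reads $\E(S(t))\le\chi\,\E(A(t)^{1/2})$. A non-sharp version is immediate: since $S(t)\le\sup_{0\le s\le t}|M(s)|$, the classical (two-sided) Burkholder-Davis-Gundy inequality applied to the continuous local martingale $M$ already yields $\E(S(t))\le c\,\E(A(t)^{1/2})$ for a universal constant $c$, and this weaker bound in fact suffices for every use of Proposition~\ref{ose} in the present paper. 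To obtain the optimal constant I would argue as follows.

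The aim is to construct a function $U(x,s,y)$ on the domain $\{(x,s,y):\,x\le s,\ s\ge0,\ y\ge0\}$, continuous and smooth enough in $(x,y)$ for It\^o's formula (in its version for processes with a running maximum) to apply, such that (i)~$U(x,s,y)\ge s-\chi\sqrt{y}$; (ii)~$\tfrac{1}{2}U_{xx}(x,s,y)+U_y(x,s,y)\le0$ for $x<s$; (iii)~$U_s(s,s,y)\le0$; (iv)~$U(0,0,0)=0$. Given such a $U$, I would apply It\^o's formula to $t\mapsto U(M(t),S(t),A(t))$: the $\d M$-part is a local martingale, the drift coming from $A$ equals $\int_0^{\cdot}\big(\tfrac{1}{2}U_{xx}+U_y\big)\,\d A\le0$ by (ii) (only the region $x<s$ contributes, since $\{M=S\}$ is visited on a $\d A$-null set), and the $\d S$-part is $\int_0^{\cdot}U_s\,\d S\le0$ by (iii) because $S$ increases only where $M=S$. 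Hence $t\mapsto U(M(t),S(t),A(t))$ is a local supermartingale; localising along stopping times reducing $M$, controlling its negative part by $\chi\sqrt{A(t)}$ through (i) (there being nothing to prove when $\E(A(t)^{1/2})=\infty$), and letting the localisation exhaust $[0,t]$ then gives
$$
\E(S(t))-\chi\,\E(A(t)^{1/2})\le\E\,U(M(t),S(t),A(t))\le U(0,0,0)=0,
$$
which is the assertion.

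The genuinely hard part is the construction of $U$ and the identification of $\chi$. Here I would use the scaling invariance $(x,s,y)\mapsto(\lambda x,\lambda s,\lambda^2 y)$ of the problem and look for a $U$ that is positively homogeneous of degree $1$; the self-similar solutions of the backward heat equation underlying (ii) are parabolic cylinder functions, equivalently confluent hypergeometric functions, and fitting such a solution to the majorisation and boundary conditions (i), (iii) determines the free boundary between the two regimes of $U$ and forces the constant in (i) to be the smallest positive zero of the relevant confluent hypergeometric function with parameter $1$, i.e.\ the constant $\nu_1=\chi$ of \cite[Theorem~4.1.(ii)]{ose}. Proposition~\ref{ose} is then exactly the continuous-martingale case of that theorem. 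I expect this explicit construction, together with the verification of (i)--(iv) for the resulting special function, to be the main obstacle; once it is available, the remaining steps above are routine stochastic calculus.
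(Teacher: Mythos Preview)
The paper does not prove this proposition at all: it is stated as a direct citation of \cite[Theorem~4.1.(ii)]{ose} and used as a black box throughout. So there is no ``paper's own proof'' to compare against; the result is imported wholesale from O\c{s}\k{e}kowski.

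That said, your sketch is essentially how the cited reference establishes the sharp constant: a Bellman-function (special-function) majorant satisfying the heat-type differential inequality, the free-boundary condition at the running maximum, and the pointwise domination $U\ge s-\chi\sqrt{y}$, followed by It\^o's formula and localisation. Your observation that the non-sharp BDG version already suffices for every application in this paper is also correct---the precise value of $\chi$ enters only through the definition of the set $\mathcal A$ and could be replaced by any universal constant without changing the logical structure of the main result. So your proposal is sound as an outline of the proof in \cite{ose}, but for the purposes of the present paper no argument is needed beyond the citation.
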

The numerical value is $\chi \approx 1.30693...$.\\

For the parameters $\ll_1>0$, and $\ll_2,\ll_3,\ll_4,\ll_5\ge 0$ in (${\bf H_3}$), set
\begin{equation*}
\begin{split}
\mathcal A:&=\big\{(\vv,\aa,\gamma_1,\gg_2)\in(0,1)\times(0,\8)^3\big|\aa-2\,\e^{\aa r_0}\big(\kk_1(\vv,\gg_1)+ \kk_2(\vv,\gg_2) \big)>0,\\ &\quad\quad\qquad\qquad\qquad\quad\qquad\qquad\qquad~\psi(\vv,\aa,\gg_1,\gg_2)<0\big\},
\end{split}
\end{equation*}
where
\begin{equation}\label{f-1}
\begin{split}
\psi(\vv,\aa,\gg_1,\gg_2):&=\frac{1-\vv}{\vv}\Big\{\alpha-  \ff{p\ll_1}{2} + \ff{p\gamma_1}{2} \big( \lambda_2 + \lambda_4  (p
-1)      \big)  +\ff{p\gamma_2 }{2}\big( \lambda_3 + \lambda_5  (p
-1) \big)\Big\}\\
&\quad+\frac{\chi^2p^2}{2\vv^2}\big(\lambda_4\gg_1+\lambda_5\gg_2\big),\\
 \kk_1(\vv,\gg_1):&=\frac{1}{\vv} \big(\lambda_2 +\lambda_4  ((1+\chi^2/\vv)p
 -1) \big)\Big(\ff{p-2}{p\gamma_1}\Big)^{\ff{p-2}{2}},\\
 \kappa_2(\vv,\gg_2):& =\frac{1}{\vv} \big(\lambda_3 +\lambda_5  ((
 1+\chi^2/\vv)p
 -1)\big)\Big(\ff{p-2}{p\gamma_2}\Big)^{\ff{p-2}{2}}.
\end{split}
\end{equation}
Here and below, we interpret expressions of the form $0^0$ as 1 (when $p=2$).

Our   main result is the following theorem.
\begin{thm}\label{main}
Fix $p\ge 2$ and assume $({\bf H_1})$, $({\bf H_2})$, and $({\bf H_3})$. If $\mathcal A\neq\emptyset$,  then \eqref{E1} admits an IPM $\pi \in \mathscr P_p(\C)$.
\end{thm}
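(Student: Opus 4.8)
The plan is to apply Kakutani's fixed point theorem to the map $\Phi$ on a suitable subset of $\mathscr P_p(\C)$ defined by $\Phi(\mu) = \{\L_{X_t^\mu} : t \ge 0\}$, or more precisely, to work with the set of all limit points of $\{\L_{X_t^\mu}\}_{t\ge 0}$ (or, cleaner, the closed convex hull of such accumulation points). The intuition: if $\nu$ is an invariant measure for the \emph{frozen} linear equation $\d X(t) = b(X_t, \nu)\d t + \si(X_t,\nu)\d W(t)$, then $\nu$ is an IPM for \eqref{E1}; so we seek a fixed point of the multi-valued map $\nu \mapsto \{\text{IPMs of the frozen equation with parameter } \nu\}$. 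To make Kakutani work we need: (i) a nonempty compact convex subset $\mathcal K \subset \mathscr P_p(\C)$ mapped into itself; (ii) the map has nonempty closed convex values; (iii) the graph is closed (upper semicontinuity).

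The first main step is to build $\mathcal K$. Using $({\bf H_3})$, I would apply Itô's formula to $\e^{\aa t}(\vv + |X(t)|^2)^{p/2}$ (the $\vv$ regularizes at the origin so that the function is smooth; the $\e^{\aa t}$ gives exponential control) for $(\vv,\aa,\gg_1,\gg_2)\in\mathcal A$, use the one-sided Burkholder--Davis--Gundy inequality from Proposition \ref{ose} to handle the martingale part via the running supremum, and use Young's inequality with the parameters $\gg_1,\gg_2$ to absorb the $\|\xi\|_\infty^2$ and $\W_p(\mu,\dd_{\bf0})^2$ terms. The conditions defining $\mathcal A$ — namely $\aa - 2\e^{\aa r_0}(\kk_1+\kk_2) > 0$ controlling the delay feedback and $\psi(\vv,\aa,\gg_1,\gg_2) < 0$ controlling the net dissipativity — are precisely what is needed so that a Gronwall-type argument yields a uniform-in-$t$ bound $\E\big(\sup_{t\le s\le t+r_0}\|X_s^\mu\|_\infty^p\big) \le C$ whenever $\mu(\|\cdot\|_\infty^p) \le R$ for an appropriate radius $R$, with the \emph{same} $R$ reproduced on the left side. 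This gives a ball $B_R = \{\nu : \nu(\|\cdot\|_\infty^p) \le R\}$ in $\mathscr P_p(\C)$ that is (essentially) mapped into itself by the flow. Since bounded sets in $\mathscr P_p(\C)$ are not compact, I would instead take $\mathcal K$ to be a set of measures with a uniform modulus-of-continuity estimate plus the $p$-th moment bound: the a priori bounds combined with Kolmogorov's tightness criterion (applied to increments of $X$, again via BDG and $({\bf H_3})$) give uniform tightness on $\C$ and uniform integrability of $\|\cdot\|_\infty^p$, hence relative compactness in $(\mathscr P_p(\C),\W_p)$. Taking $\mathcal K$ to be the closed convex hull of $\bigcup_{\mu\in B_R}\{\L_{X_t^\mu}: t\ge T_0\}$ for large $T_0$ should produce a nonempty, convex, compact, self-mapped set.

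The second main step is to define the multi-valued map on $\mathcal K$. For $\nu \in \mathcal K$, let $G(\nu)$ be the set of all subsequential weak-$\W_p$ limits of $\big(\frac1{T}\int_0^T \L_{X_t^{\nu}}\,\d t\big)_{T>0}$ for the frozen equation with parameter $\nu$ started from (say) $X_0 \sim \nu$; by Krylov--Bogoliubov applied to the frozen \emph{linear} equation (which is a genuine Markov process), together with the tightness from step one, $G(\nu)$ is a nonempty set of IPMs of the frozen equation, and it is closed and convex. One checks $G(\nu) \subset \mathcal K$. Upper semicontinuity / closed graph: if $\nu_n \to \nu$ in $\W_p$ and $\pi_n \in G(\nu_n)$ with $\pi_n \to \pi$, I need $\pi \in G(\nu)$ — i.e. $\pi$ is an IPM of the frozen equation with parameter $\nu$. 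This follows from stability of the frozen SDE in its measure parameter: using $({\bf H_1})$ (continuity) and $({\bf H_2})$ (the $\W_p(\mu,\nu)^2$ Lipschitz-type bound), together with the uniform moment bounds, one shows $\W_p\big(P^{\nu_n}_t \rho, P^\nu_t \rho\big) \to 0$ uniformly for $\rho$ in $\mathcal K$, so the invariance equation $P^{\nu_n}_t \pi_n = \pi_n$ passes to the limit $P^\nu_t \pi = \pi$.

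The hard part, I expect, will be the bookkeeping in step one: extracting from Itô's formula applied to $\e^{\aa t}(\vv+|X(t)|^2)^{p/2}$ — with the delay terms $\|X_t\|_\infty^2$ and the measure terms $\W_p(\L_{X_t},\dd_{\bf 0})^2 = \E\|X_t\|_\infty^2 \le (\E\|X_t\|_\infty^p)^{2/p}$ — a \emph{closed} inequality for $\phi(t) := \E(\sup_{-r_0\le s\le t}|X(s)|^p)$ or for $\E\|X_t\|_\infty^p$. The interplay of the running supremum (needed because $\|X_t\|_\infty$ appears on the right) with the BDG constant $\chi$ and the necessity of choosing $\vv$ close to $1$, $\gg_1,\gg_2$ small, and $\aa$ in the right window is exactly what the algebraic conditions encoded in $\psi < 0$ and $\aa - 2\e^{\aa r_0}(\kk_1+\kk_2) > 0$ are engineered to permit; verifying that these conditions genuinely close the Gronwall loop (rather than merely appearing to) is the crux. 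A secondary subtlety is ensuring that the self-map property holds with strict slack so that $\mathcal K$ can be taken compact and convex simultaneously, and that $G$ indeed maps $\mathcal K$ into $\mathcal K$ rather than merely into $B_R$.
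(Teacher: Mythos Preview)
Your overall strategy matches the paper's: apply Kakutani's fixed point theorem to the multi-valued map $\nu\mapsto\Lambda_\nu:=\{\text{IPMs of the frozen equation with parameter }\nu\}$, with non-emptiness coming from Krylov--Bogoliubov and the closed-graph property coming from the $\W_p$-stability estimate (your step two is essentially Lemma~\ref{Lem} and the end of the proof of Theorem~\ref{main}).

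There is, however, a genuine gap in your construction of the compact set $\mathcal K$. You assert that the a~priori $p$-th moment bound plus Kolmogorov tightness yields ``uniform integrability of $\|\cdot\|_\infty^p$, hence relative compactness in $(\mathscr P_p(\C),\W_p)$''. But a uniform bound on $\E\|X_t\|_\infty^p$ does \emph{not} give uniform integrability of $\|\cdot\|_\infty^p$; tightness in $\mathscr P(\C)$ plus a $p$-th moment bound only gives relative compactness in $\mathscr P_{p'}(\C)$ for $p'<p$, not in $\mathscr P_p(\C)$. The paper closes this gap with a continuity argument you are missing: since $\psi$, $\kappa_1$, $\kappa_2$ depend continuously on $p$, any fixed $(\vv,\aa,\gg_1,\gg_2)\in\mathcal A$ remains in the corresponding set $\mathcal A$ defined with $p$ replaced by some $q>p$ sufficiently close to $p$. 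One then proves the moment bound (Lemma~\ref{importantlemma}) at exponent $q$, so that the $q$-th moment bound supplies the uniform integrability of $\|\cdot\|_\infty^p$ needed for $\W_p$-compactness (Lemma~\ref{compact}). Without this lift to $q>p$ your $\mathcal K$ need not be compact in the topology in which you run Kakutani.

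A secondary point: your reading of the parameter $\vv$ is off. It is not a regularizer in $(\vv+|x|^2)^{p/2}$; the paper applies It\^o directly to $\e^{\aa t}|X(t)|^p$. Rather, $\vv\in(0,1)$ enters through an elementary splitting lemma (if $\sup(f+\tilde f)\le A$ with $\tilde f$ nondecreasing, then $\sup f\le A/\vv-(1/\vv-1)\tilde f$), which is how the factor $\frac{1-\vv}{\vv}$ in $\psi$ and the $\frac{1}{\vv^2}$ in the BDG term arise. This does not break your argument, but it means your bookkeeping in step one will not reproduce the functions $\psi,\kappa_1,\kappa_2$ as stated, and you will need to re-derive the correct algebraic condition.
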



\begin{rem}
  Note that, for fixed  $\ll_2,\ll_3,\ll_4,\ll_5\ge 0$ and $p>2$, the set $\mathcal A$ is non-empty  if $\lambda_1$ is sufficiently large: to see this, fix $(\varepsilon, \alpha)\in (0,1)\times (0,\infty)$.
  Then choose $\gamma_1,\gamma_2$ so large that $\aa-2\,\e^{\aa r_0}\big(\kk_1(\vv,\gg_1)+ \kk_2(\vv,\gg_2) \big)>0$. Finally choose $\lambda_1>0$ so large that $\psi(\vv,\aa,\gg_1,\gg_2)<0$.

  For $p=2$ this argument fails (in general) but it is still true that for fixed  $\ll_2,\ll_3,\ll_4,\ll_5\ge 0$ and $p=2$ and $\lambda_1$ sufficiently large  \eqref{E1} admits an IPM $\pi \in \mathscr P_2(\C)$. To see this, replace $p=2$ by some $q>p$ and observe that conditions $({\bf H_1})$, $({\bf H_2})$, and $({\bf H_3})$ still hold true for $q$
  instead of $p$ with the same constants, provided we restrict the domains of $b$ and $\sigma$ accordingly. By arguing as above, we see that the set $\mathcal A$ defined in terms of $q$ instead of $p$
  is non-empty for $\lambda_1$ sufficiently large, so Theorem \ref{main} even guarantees the existence of an IPM $\pi \in \mathscr P_q(\C)\big(\subset  \mathscr P_2(\C)\big)$.
\end{rem}

\begin{rem}
  Let us now see how $\lambda_1$ has to grow as a function of $p$ for an IPM to exist. Fix $(\vv,\aa,\gamma_1,\gg_2)\in(0,1)\times(0,\8) \times (1,\infty)^2$ and fix
  $\ll_2,\ll_3,\ll_4,\ll_5\ge 0$.  Then $\kappa_1$ and $\kappa_2$ converge to 0 as $p \to \infty$ and it is  clear that there exists some $\beta>0$ such that for $\lambda_1>\beta p$ we have
  $\psi(\vv,\aa,\gg_1,\gg_2)<0$ for all $p \ge 2$. The assumption that $\lambda_1$ has to grow at least linearly in $p$ to obtain an IPM in $\mathscr P_p(\C)$ cannot be avoided in general
  (not even in the case of a one-dimensional SDE without delay and without dependence on the law of the process) as the example in Remark \ref{F2} shows.
\end{rem}

\begin{rem}
  Theorem \ref{main} provides a stronger result than the main result in \cite{ESV} even in the case when neither $b$ nor $\sigma$ depend on the law $\L_{X_t}$ (i.e.,~$\lambda_3=\lambda_5=0$) since the main result in
  \cite{ESV} requires that  the drift is superlinear while we allow that the drift has linear (negative)  growth of sufficient strength.
\end{rem}

\begin{rem}\label{F2}
  Let $d=m=1$, $\sigma>0$, $\lambda_1>0$ and consider the SDE
  $$
\d X(t)=-\lambda_1 X(t)\,\d t +\sigma \sqrt{|X(t)|^2+1}\,\d W(t)
$$
(without delay and without dependence on the law of the process).
The corresponding real-valued Markov process admits an invariant measure, the so-called {\em speed measure}, with density
$$
f(x)=\frac 1{x^2+1} \exp\Big\{- 2\int_0^{|x|} \frac {\lambda_1u}{\sigma^2(u^2+1)}\,\d u\Big\}=\big(x^2+1\big)^{-\frac {\lambda_1}{\sigma^2}-1},\quad x \in \R
$$
(see \cite[p.343 \& p.353]{KS}). This measure is finite and hence, after normalizing, an IPM. It has a finite $p$-th moment
if and only if $p-2\frac{\lambda_1}{\sigma^2}<1$ or, equivalently, $\lambda_1>\frac 12(p-1)\sigma^2$ showing that, in general, $\lambda_1$ has to increase at least linearly as a function of $p$ to obtain an
IPM in $\mathscr P_p(\C)$. Of course, in this example, an IPM exists for every $p \ge 2$ and every $\lambda_1>0$ but it does not have a finite $p$-th moment when $\lambda_1$ is too small.
\end{rem}

The remainder of this paper is organized as follows. In Section \ref{sec2}, we prepare several auxiliary lemmas which are crucial for the proof Theorem \ref{main}.   Section \ref{sec3} is devoted  to completing the proof of Theorem \ref{main}.

\section{Preliminary Lemmas}\label{sec2}
Under $({\bf H_1})$ and $({\bf H_2})$,
for fixed $\mu,\nu\in\mathscr P_p(\C)$, \cite[Theorem 2.3]{RS} shows that the following frozen   SDE with memory
\begin{equation}\label{E2}
\d X^{\mu,\nu}(t)=b(X^{\mu,\nu}_t,\nu)\d t+\si(X^{\mu,\nu}_t,\nu)\d W(t), ~~t\ge0,~~ X_0^{\mu,\nu}\sim \mu
\end{equation}
has a unique functional  solution $(X^{\mu,\nu}_t)_{t\ge 0}$ provided that $X_0^{\mu,\nu}$ is $\F_0$-measurable. Further, $(X^{.,\nu}_t)_{t\ge 0}$ is a Markov process which is even
Feller, i.e., the corresponding Markov semigroup $(P_t^\nu)_{t \ge 0}$ on $\C$ maps $C_b(\C)$, the set of  bounded continuous functions,  to $C_b(\C)$; see e.g.
\cite[Proposition 3.1]{ESV}.

\begin{lem}\label{Lem}
Assume $({\bf H_1})$ and $({\bf H_2})$. Then, for any  $\mu,\nu_1,\nu_2\in\mathscr P_p(\C)$,
\begin{equation}\label{W2}
\E\|X^{\mu,\nu_1}_t-X^{\mu,\nu_2}_t\|^p_\8\le C\W_p(\nu_1,\nu_2)^p t\e^{Ct},~~~t\ge0,
\end{equation}
where $X^{\mu,\nu_1}_0=X^{\mu,\nu_2}_0\sim\mu$ and  $C:=2(\chi^2+1)Kp^2$.
\end{lem}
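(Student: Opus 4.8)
The plan is to fix $\mu,\nu_1,\nu_2 \in \mathscr P_p(\C)$, let both solutions $X^{\mu,\nu_1}$ and $X^{\mu,\nu_2}$ be driven by the \emph{same} Brownian motion $W$ with the same initial segment $X_0 \sim \mu$, and set $Z(t) := X^{\mu,\nu_1}(t) - X^{\mu,\nu_2}(t)$, so that $Z_0 \equiv \mathbf{0}$. First I would apply It\^o's formula to $|Z(t)|^2$. Writing $b_i(s) := b(X^{\mu,\nu_i}_s,\nu_i)$ and $\si_i(s) := \si(X^{\mu,\nu_i}_s,\nu_i)$, this gives
\[
|Z(t)|^2 = 2\int_0^t \<Z(s), b_1(s)-b_2(s)\>\,\d s + \int_0^t \|\si_1(s)-\si_2(s)\|_{\rm HS}^2\,\d s + M(t),
\]
where $M(t) := 2\int_0^t \<Z(s),(\si_1(s)-\si_2(s))\,\d W(s)\>$ is a continuous local martingale with $M(0)=0$. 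The drift-plus-quadratic-variation term is handled by $({\bf H_2})$: since $\<Z(s),b_1(s)-b_2(s)\> \le \<Z(s),b_1(s)-b_2(s)\>^+$, and noting $Z(s) = X^{\mu,\nu_1}(s)-X^{\mu,\nu_2}(s)$ is exactly $\xi(0)-\eta(0)$ for the two segments, $({\bf H_2})$ bounds the $\d s$-integrand by $K\{\|X^{\mu,\nu_1}_s - X^{\mu,\nu_2}_s\|_\8^2 + \W_p(\nu_1,\nu_2)^2\}$.

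Next I would take $\sup_{0\le s\le t}$ and then expectations. The martingale term is controlled via the one-sided Burkholder--Davis--Gundy inequality of Proposition~\ref{ose}: $\E(\sup_{0\le s\le t} M(s)) \le \chi\,\E([M,M]_t^{1/2})$, and $[M,M]_t = 4\int_0^t |(\si_1(s)-\si_2(s))^{\!*}Z(s)|^2\,\d s \le 4\int_0^t |Z(s)|^2\|\si_1(s)-\si_2(s)\|_{\rm HS}^2\,\d s$. After bounding $|Z(s)| \le \|X^{\mu,\nu_1}_s-X^{\mu,\nu_2}_s\|_\8 =: \|Z_s\|_\8$ and pulling $\sup_{0\le s\le t}\|Z_s\|_\8$ out of the square root, Young's inequality ($2ab \le \epsilon a^2 + \epsilon^{-1}b^2$ for a suitably chosen $\epsilon$, absorbing the $\tfrac12\E\sup\|Z_s\|_\8^2$ term) yields an estimate of the form
\[
\E\Big(\sup_{0\le s\le t}\|Z_s\|_\8^2\Big) \le C_1\int_0^t \E\Big(\sup_{0\le u\le s}\|Z_u\|_\8^2\Big)\,\d s + C_1\,\W_p(\nu_1,\nu_2)^2\, t,
\]
for an explicit $C_1$ linear in $K$. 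Gr\"onwall's inequality then gives the $p=2$ case. For general $p\ge 2$ I would instead apply It\^o's formula to $|Z(t)|^p$ (or to $(|Z(t)|^2)^{p/2}$, which is legitimate since $Z$ has continuous semimartingale components and one can work with $(\epsilon+|Z|^2)^{p/2}$ and let $\epsilon\downarrow 0$, or simply use the stopped process argument to justify the local-martingale-to-martingale passage). The resulting drift term picks up factors $\tfrac{p}{2}|Z|^{p-2}$ times the $({\bf H_2})$ bound (with the extra $\tfrac{p(p-2)}{2}|Z|^{p-2}$ term from the Hessian combining into the same structure), and the quadratic variation of the new martingale is $p^2\int_0^t |Z(s)|^{2(p-1)}\|\si_1(s)-\si_2(s)\|_{\rm HS}^2\,\d s$; applying Proposition~\ref{ose} and H\"older/Young exactly as before produces the constant $C = 2(\chi^2+1)Kp^2$ after tracking the $p$-dependent factors ($p^2$ from the It\^o correction and the BDG quadratic variation, $\chi^2+1$ from combining the martingale bound with the drift bound via Young), and Gr\"onwall then gives \eqref{W2}.

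The main obstacle I anticipate is purely technical bookkeeping rather than conceptual: first, justifying the application of It\^o's formula to $|Z|^p$ and the passage from local martingale to martingale (handled by a standard localization/stopping-time argument using the $L^p$-bounds on $\sup_{0\le t\le T}\|X^{\mu,\nu_i}_t\|_\8$ quoted from \cite{BRW}, together with the fact that $\si$ is bounded on bounded sets by $({\bf H_1})$); and second, carefully choosing the Young's-inequality parameter and collecting all $p$-dependent constants so that they assemble into precisely $2(\chi^2+1)Kp^2$ and the factor $t\,\e^{Ct}$ (rather than merely $\e^{Ct}$) — the linear-in-$t$ prefactor comes from the fact that the inhomogeneous term $C_1\W_p(\nu_1,\nu_2)^2 t$ already carries a factor $t$, and Gr\"onwall applied to $g(t) \le a t + C_1\int_0^t g(s)\,\d s$ gives $g(t) \le a t\,\e^{C_1 t}$ via the sharper form of Gr\"onwall's lemma (or by first noting $g(t)\le at + C_1\int_0^t g$ and iterating).
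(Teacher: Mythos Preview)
Your proposal is correct and follows essentially the same route as the paper: It\^o's formula applied to $|\Psi(t)|^p$ (the paper goes directly to general $p$ rather than first doing $p=2$), the one-sided BDG inequality of Proposition~\ref{ose} on the martingale part, Young's inequality to absorb $\tfrac12\,\E\|\Psi_t\|_\infty^p$, and Gronwall. The only cosmetic difference is that the paper takes the supremum over the window $[(t-r_0)^+,t]$ (exploiting $\Psi_0\equiv 0$) rather than over $[0,t]$, and uses an explicit stopping time $\tau_N$ for the localization you mention; neither affects the argument or the constant.
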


\begin{proof}
  Let $\Psi(t)=X^{\mu,\nu_1}(t)-X^{\mu,\nu_2}(t)$, $t\ge-r_0$, and
  $$
M(t)=p\int_0^t\big|\Psi(s)\big|^{p-2}\big\<\Psi(s),\big(\si(X_s^{\mu,\nu_1},\nu_1)-\si(X_s^{\mu,\nu_2},\nu_2)\big)\,\d W(s)\big\>,~~~t\ge0.
  $$
For $N \in \N$, define $\tau_N=\inf\{s \ge 0:\,|\Psi(s)| \ge N\}$,
  $\Psi^N(t)=\Psi(t \wedge \tau_N)$,  and $M_N(t)=M(t \wedge \tau_N)$.
Note that $M_N$ is a martingale.
By It\^o's formula, it follows from $({\bf H_2})$ that
\begin{equation*}
\begin{split}
\d \big|\Psi(t)\big|^p&\le \ff{p}{2}\big|\Psi(t)\big|^{p-2}\big\{2\big\<\Psi(t),b(X_t^{\mu,\nu_1},\nu_1)-b(X_t^{\mu,\nu_2},\nu_2)\big\>\\
&\quad+  (p-1) \big\|\si(X_t^{\mu,\nu_1},\nu_1)-\si(X_t^{\mu,\nu_2},\nu_2)\big\|_{\rm HS}^2\big\}\,\d t+\d M(t)\\
&\le \ff{p^2K}{2}\big|\Psi(t)\big|^{p-2}   \big\{\|\Psi_t\|_\8^2+\W_p(\nu_1,\nu_2)^2\big\} \,\d t +\d M(t)\\
&\le pK\big\{(p-1)\|\Psi_t\|_\8^p+\W_p(\nu_1,\nu_2)^p\big\}\,\d t+\d M(t),
\end{split}
\end{equation*}
where we used Young's inequality in the last step.
Therefore, due to  $X^{\mu,\nu_1}_0=X^{\mu,\nu_2}_0$,
we have
\begin{equation}\label{W1}
\E \big[\|\Psi_t^N\|^p_\8\big]\le  pK\int_0^t\big\{(p-1)\E\big[\|\Psi_s^N\|_\8^p\big]+\W_p(\nu_1,\nu_2)^p\big\}\,\d s+\E\Big[\sup_{(t-r_0)^+\le s\le t} M_N(s)\Big].
\end{equation}
Next, Proposition \ref{ose},  $({\bf H_2})$, and Young's inequality yield
\begin{equation*}
\begin{split}
  \E\Big[&\sup_{(t-r_0)^+\le s\le t} M_N(s)\Big]=\E\Big[\sup_{(t-r_0)^+\le s\le t} \Big(M_N(s)-M_N\big( (t-r_0)^+\big)\Big)\Big]\\
  &\le \chi  p\,\E\bigg[\bigg(\int_{(t-r_0)^+}^t\1_{[0,\tau_N)}(s)    \big|\Psi^N(s)\big|^{2(p-1)}\big\|\si(X_s^{\mu,\nu_1},\nu_1)-\si(X_s^{\mu,\nu_2},\nu_2)\big\|_{\rm HS}^2\,\d s\bigg)^{1/2}\bigg]\\
&\le \chi p\sqrt{K}\, \E\bigg[\bigg(\|\Psi_t^N\|^p_\8\int_{(t-r_0)^+}^t \1_{[0,\tau_N)}(s)    \big|\Psi^N(s)\big|^{p-2}\big\{\|\Psi_s^N\|_\8^2+ \W_p(\nu_1,\nu_2)^2\big\}        \,\d s\bigg)^{1/2}\bigg]\\
&\le \chi p \sqrt{2K}  \E\bigg[\bigg(\|\Psi_t^N\|^p_\8\int_0^t\big\{\|\Psi_s^N\|_\8^p+ \W_p(\nu_1,\nu_2)^p\big\}\,\d s\bigg)^{1/2}\bigg]\\
&\le \ff{1}{2}\E\big[\|\Psi_t^N\|^p_\8\big]+\chi^2Kp^2\int_0^t\big\{\E\big[\|\Psi_s^N\|_\8^p\big]+ \W_p(\nu_1,\nu_2)^p\big\}\,\d s.
\end{split}
\end{equation*}
Inserting the estimate above back into \eqref{W1} implies
\begin{equation*}
\E\big[ \|\Psi_t^N\|^p_\8 \big]\le 2(\chi^2+1) K p^2\int_0^t\big\{\E\big[\|\Psi_s^N\|_\8^p\big]+ \W_p(\nu_1,\nu_2)^p\big\}\,\d s.
\end{equation*}
Applying Gronwall's inequality and letting $N \to \infty$ implies \eqref{W2}.
\end{proof}

Before we move on, let us  introduce some additional notation. Let
\begin{equation}\label{f-2}
\mathcal U=\Big\{(\vv,\aa,\gg_1,\gg_2)\in(0,1)\times(0,\8)^3\Big|\,2\kk_1(\vv,\gg_1)\e^{\aa r_0}<\aa, \quad \psi(\vv,\aa,\gg_1,\gg_2)<0 \Big\},
\end{equation}
where the functions $\kk_1  $ and $\psi$ were introduced in \eqref{f-1}. Note that $\mathcal A \subseteq \mathcal U$.
If $\mathcal U\neq\emptyset$, then  there exist  $(\vv,\aa,\gamma_1,\gg_2,\gg_3)\in(0,1)\times(0,\8)^4$ such that
\begin{equation}\label{s-5}
\begin{split}
\phi(\vv,\aa,\gg_1,\gg_2,\gg_3):&= \psi(\vv,\aa,\gg_1,\gg_2)+\frac{1}{2}\Big( \frac{1-\vv}{\vv}+\frac{\chi^2}{\vv^2}\Big) p^2\gamma_3 \lambda_0
<0,
\end{split}
\end{equation}
and
\begin{equation}\label{ss-2}
\aa>2\e^{\alpha r_0}\kk_1(\vv,\gg_1).
\end{equation}
Furthermore, we  set
\begin{equation}\label{ss-1}
\kk_3(\vv,\gg_3):= \frac{p\lambda_0}{\vv} \big(1+\chi^2/\vv\big) \Big(\ff{p-2}{p\gamma_3}\Big)^{\ff{p-2}{2}}. 
\end{equation}

\begin{lem}\label{importantlemma}
Assume $({\bf H_1})$, $({\bf H_2})$, and $({\bf H_3})$.
If $\mathcal U\neq\emptyset$, then
for any   $\mu,\nu \in \mathscr P_p(\C)$,
\begin{equation}\label{ss-3}
\begin{split}
\E\|X_t^{\mu,\nu}\|_\infty^p &\le \ff{2  \e^{ \alpha r_0}}{\aa-2  \kappa_1 (\vv,\gg_1) \e^{\alpha r_0}}\big(\kk_3(\vv,\gg_3)+\kappa_2(\vv,\gg_2)  \W_p(\nu,\dd_{\bf0})^p \big) \\
 &\quad+\e^{\alpha r_0}\big(1+4/\vv+4    \kappa_1 (\vv,\gg_1) r_0 \e^{\alpha r_0}\big)\mu(\|\cdot\|^p_\8)\e^{-(\aa-2  \kappa_1 (\vv,\gg_1) \e^{\alpha r_0})t},\quad t\ge0,
\end{split}
\end{equation}
 where $\kappa_1 (\cdot)$ and $\kappa_2(\cdot)$ were introduced in \eqref{f-1}.
\end{lem}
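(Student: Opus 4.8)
The plan is to apply It\^o's formula to $t\mapsto\e^{\aa t}|X^{\mu,\nu}(t)|^p$ for the parameters $(\vv,\aa,\gg_1,\gg_2,\gg_3)\in(0,1)\times(0,\8)^4$ furnished by the hypothesis $\mathcal U\neq\emptyset$ (so that \eqref{s-5} and \eqref{ss-2} hold), to exploit the dissipativity in \eqref{W4}, to estimate the martingale part by the one-sided Burkholder--Davis--Gundy inequality of Proposition \ref{ose}, and then to reduce everything to a linear integral inequality whose solution is the right-hand side of \eqref{ss-3}. All computations are carried out for the process stopped at $\tau_N:=\inf\{t\ge0:|X^{\mu,\nu}(t)|\ge N\}$, $N\in\N$, so that the relevant stochastic integral is a genuine martingale; non-explosion of \eqref{E2} and the associated $p$-th moment bound (both contained in \cite[Theorem 2.3]{RS}) guarantee $\tau_N\uparrow\8$ a.s.\ and the finiteness of the expectations below, and let one pass to the limit $N\to\8$ by Fatou's lemma at the end. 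Throughout write $X=X^{\mu,\nu}$ and recall that $\E\|X_0\|_\8^p=\mu(\|\cdot\|_\8^p)$.

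By It\^o's formula, \eqref{W4} and \eqref{W5}, on $[0,\tau_N]$ the drift of $\e^{\aa s}|X(s)|^p$ is at most $\e^{\aa s}$ times
\begin{align*}
\big(\aa-\tfrac{p\ll_1}{2}\big)|X(s)|^p&+\tfrac{p^2\ll_0}{2}\,|X(s)|^{p-2}+\tfrac p2\big(\ll_2+(p-1)\ll_4\big)|X(s)|^{p-2}\|X_s\|_\8^2\\
&+\tfrac p2\big(\ll_3+(p-1)\ll_5\big)|X(s)|^{p-2}\,\W_p(\nu,\dd_{\bf0})^2,
\end{align*}
the decisive feature being the dissipative term $-\tfrac{p\ll_1}{2}|X(s)|^p$, while the (scaled) martingale $M(\cdot)=p\int_0^\cdot\e^{\aa s}|X(s)|^{p-2}\langle X(s),\si(X_s,\nu)\,\d W(s)\rangle$ has quadratic variation at most $p^2\int_0^\cdot\e^{2\aa s}|X(s)|^{2(p-1)}\|\si(X_s,\nu)\|_{\rm HS}^2\,\d s$. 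Taking suprema over $[0,t\wedge\tau_N]$ and then expectations, I would bound $\E\sup M$ by Proposition \ref{ose}; using $\e^{2\aa s}|X(s)|^{2(p-1)}=(\e^{\aa s}|X(s)|^p)(\e^{\aa s}|X(s)|^{p-2})$ and then Young's inequality with weight $\vv$ splits that bound into $\tfrac\vv2\,\E\sup_{[0,t\wedge\tau_N]}(\e^{\aa\cdot}|X|^p)$, absorbed into the left-hand side, plus $\tfrac{\chi^2p^2}{2\vv}\,\E\int_0^{t\wedge\tau_N}\e^{\aa s}|X(s)|^{p-2}\|\si(X_s,\nu)\|_{\rm HS}^2\,\d s$; by \eqref{W5} the latter contributes further terms of the form $|X(s)|^{p-2}\times(\text{non-negative})$, now carrying the factor $\chi^2/\vv$. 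Young's inequality in the form
$$
|X(s)|^{p-2}z\;\le\;\gg_i\,|X(s)|^p+\tfrac2p\Big(\tfrac{p-2}{p\gg_i}\Big)^{(p-2)/2}z^{p/2},\qquad z\ge0,
$$
applied with $\gg_1$ to the $\|X_s\|_\8^2$-terms, $\gg_2$ to the $\W_p(\nu,\dd_{\bf0})^2$-terms and $\gg_3$ to the $\ll_0$-terms, then collapses the integrand to a linear combination of $|X(s)|^p$, $\|X_s\|_\8^p$, $\W_p(\nu,\dd_{\bf0})^p$ and a constant. By the very definitions \eqref{f-1} and \eqref{ss-1}, the coefficients of the last three are $\vv\kk_1(\vv,\gg_1)$, $\vv\kk_2(\vv,\gg_2)$ and $\vv\kk_3(\vv,\gg_3)$, while --- once the dissipative term is put back in --- the net coefficient of the $\int\e^{\aa s}|X(s)|^p\,\d s$ contribution is controlled by $\phi(\vv,\aa,\gg_1,\gg_2,\gg_3)$, in the sense that \eqref{s-5} ($\phi<0$) forces it to be non-positive, so that this contribution may simply be dropped.

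It remains to absorb the delay. For $s\ge0$,
$$
\e^{\aa s}\|X_s\|_\8^p\;\le\;\e^{\aa r_0}\sup_{s-r_0\le u\le s}\e^{\aa u}|X(u)|^p\;\le\;\e^{\aa r_0}\Big(\|X_0\|_\8^p+\sup_{0\le u\le s}\e^{\aa u}|X(u)|^p\Big),
$$
so that, with $h_N(t):=\E\sup_{0\le u\le t}\e^{\aa(u\wedge\tau_N)}|X(u\wedge\tau_N)|^p$, the previous step yields a linear integral inequality of the form $h_N(t)\le a_N(t)+2\kk_1(\vv,\gg_1)\e^{\aa r_0}\int_0^t h_N(s)\,\d s$, where $a_N(t)$ is, up to lower-order terms, a multiple of $\e^{\aa t}-1$ (coming from the constant and $\W_p(\nu,\dd_{\bf0})^p$ contributions) together with a $\mu(\|\cdot\|_\8^p)$-term. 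By \eqref{ss-2} the coefficient $2\kk_1(\vv,\gg_1)\e^{\aa r_0}$ is strictly smaller than the rate $\aa$ of the weight $\e^{\aa t}$, and one has to solve this inequality \emph{sharply} --- via the associated linear ODE rather than the crude bound $h_N(t)\le a_N(t)\e^{2\kk_1(\vv,\gg_1)\e^{\aa r_0}t}$, which would inflate the exponent to $\aa+2\kk_1(\vv,\gg_1)\e^{\aa r_0}$ and destroy the decay. Dividing the resulting bound by $\e^{\aa t}$, invoking $\|X_t\|_\8^p\le\e^{\aa r_0}\e^{-\aa t}\sup_{0\le u\le t}\e^{\aa u}|X(u)|^p$ (together with the trivial estimate for $t<r_0$), and letting $N\to\8$ by Fatou's lemma yields precisely \eqref{ss-3}.

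The main obstacle is the bookkeeping. One must track every constant through the chain It\^o $\to$ one-sided Burkholder--Davis--Gundy $\to$ Young so that the pieces reassemble into exactly $\kk_1,\kk_2,\kk_3$ and $\phi$ as defined in \eqref{f-1}, \eqref{s-5} and \eqref{ss-1} --- in particular the interplay between the weight $\vv$ from the martingale splitting and the parameters $\gg_1,\gg_2,\gg_3$ from the Young steps --- and, most importantly, one must organize the treatment of the delay so that the resulting integral inequality has constant $2\kk_1(\vv,\gg_1)\e^{\aa r_0}$ strictly below $\aa$. That last point is exactly what turns the final step into genuine exponential decay at the rate $\aa-2\kk_1(\vv,\gg_1)\e^{\aa r_0}$ asserted in \eqref{ss-3}, rather than into exponential growth. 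A secondary and more routine matter is the passage $N\to\8$ and the finiteness of all expectations involved, both of which rely on the non-explosion and moment properties of \eqref{E2} recalled above.
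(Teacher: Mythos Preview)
Your outline is correct and would prove the lemma, but it differs structurally from the paper's argument, and your bookkeeping does not quite land on the stated constants.

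The paper does \emph{not} work with the full running supremum $h_N(t)=\E\sup_{0\le u\le t}\e^{\aa u}|X(u)|^p$. Instead it introduces the \emph{windowed} quantity
$g(t)=\E\sup_{(t-r_0)^+\le r\le t}\big(\e^{\aa r}|X(r)|^p\big)$
and, crucially, uses an elementary ``basic fact'' (if $f+\tilde f\le A$ with $\tilde f$ non-decreasing, then $\sup f\le A/\vv-(1/\vv-1)\tilde f(b)$) applied with $\tilde f(r)=-\Theta\int_0^r\e^{\aa s}|X(s)|^p\,\d s$ to manufacture the factor $1/\vv$ on all terms and the factor $(1-\vv)/\vv$ on the dissipative integral. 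That trick is precisely what makes $\phi$ appear as written in \eqref{s-5} and what produces the $2/\vv$ in front of $\E|X(0)|^p$ after a further $\tfrac12 g(t)$ is absorbed. The one-sided BDG is then applied only over the window $[(t-r_0)^+,t]$, and the resulting integral over the window combines with the windowed dissipative term to give $\phi<0$; only the $\|X_s\|_\8^p$-integrals survive on the full interval $[0,t]$ and carry exactly the coefficient $\kk_1$, yielding the Gronwall constant $2\kk_1\e^{\aa r_0}$ after the factor~$2$.

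Your route via the full supremum is cleaner and actually yields a \emph{sharper} inequality: after absorbing $\tfrac\vv2 h_N$ you divide by $1-\vv/2$, so the $\|X_s\|_\8^p$-coefficient becomes $\tfrac{2\vv}{2-\vv}\kk_1$ rather than $2\kk_1$, and likewise the $|X(s)|^p$-coefficient equals $\tfrac{2}{2-\vv}\big[\Theta+\tfrac{\chi^2p^2}{2\vv}(\ll_4\gg_1+\ll_5\gg_2+\ll_0\gg_3)\big]$, which is indeed $\le 0$ under \eqref{s-5} because $\phi=\tfrac{1-\vv}{\vv}\Theta+\tfrac{\chi^2p^2}{2\vv^2}(\ll_4\gg_1+\ll_5\gg_2+\ll_0\gg_3)$ and $\tfrac{1}{\vv(1-\vv)}>\tfrac1\vv$. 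Hence your integral inequality has Gronwall constant $\tfrac{2\vv}{2-\vv}\kk_1\e^{\aa r_0}<2\kk_1\e^{\aa r_0}<\aa$ and you obtain an estimate strictly stronger than \eqref{ss-3}; so the lemma follows a fortiori. But your claim that the constants reassemble ``precisely'' into \eqref{ss-3}, and in particular that the Gronwall constant is $2\kk_1\e^{\aa r_0}$, is not what your own computation gives --- the factor $2$ you write down has no source in your scheme. This is harmless for the truth of the lemma, but worth being explicit about.
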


\begin{proof}
Since $\mathcal U\neq\emptyset$, there exist $(\vv,\aa,\gg_1,\gg_2,\gg_3)\in(0,1)\times(0,\8)^4$ such that \eqref{s-5} and \eqref{ss-2} hold.   In the sequel, we shall stipulate  the parameters
 $(\vv,\aa,\gg_1,\gg_2,\gg_3)\in(0,1)\times(0,\8)^4$ satisfying \eqref{s-5} and \eqref{ss-2}  and fix $\mu,\nu \in  \mathscr P_p(\C)$ for $p\ge2$. Below, for notational simplicity,  we shall write $X(t)$ and $X_t$ instead of $X^{\mu,\nu}(t)$ and $X_t^{\mu,\nu}$, respectively.
By  It\^o's formula, it follows from $({\bf H_3})$ that
\begin{equation}\label{s-2}
\begin{split}
\d \big(\e^{\aa t}|X(t)|^p\big)&\le\e^{\aa t}\big\{\aa |X(t)|^p+p/2|X(t)|^{p-2}\big(2\<X(t),b(X_t,\nu)\>\\
 &\quad + (p-1)\|\si(X_t,\nu)\|_{\rm HS}^2\big)\big\}\,\d t +\d N(t)\\
                                         &\le \e^{\aa t}\Big\{ \big(\alpha-  p\lambda_1/2 \big)\big|X(t)\big|^p+\frac {p^2\lambda_0}2 \big|X(t)\big|^{p-2}\\
  &\quad +  \ff{p}{2}\big(\lambda_2 +\lambda_4  (p-1)   \big)
 \|X_t\|_\infty^2   \big|X(t)\big|^{p-2}     \\
 &\quad +  \ff{p}{2}\big(\lambda_3 +\lambda_5  (p-1)   \big) \W_p(\nu,\dd_{\bf0})^2  \big|X(t)\big|^{p-2}\Big\}\,\d t +\d N(t),
\end{split}
\end{equation}
where
$$N(t):=p\int_0^t\e^{\aa s}|X(s)|^{p-2}\<X(s),\si(X_s,\nu)\,\d W(s)\>,\quad t\ge0.$$
Applying Young's inequality in case $p>2$, we obtain
\begin{equation}\label{a-0-0}
\begin{split}
 \|X_t\|_\infty^2   \big|X(t)\big|^{p-2}&\le  \gamma_1  \big|X(t)\big|^p+\ff{2}{p} \Big(\ff{p-2}{p\gamma_1}\Big)^{\ff{p-2}{2}}\|X_t\|_\infty^p,\\
\W_p(\nu,\dd_{\bf0})^2  \big|X(t)\big|^{p-2}&\le \gamma_2  \big|X(t)\big|^p+\ff{2}{p} \Big(\ff{p-2}{p\gamma_2}\Big)^{\ff{p-2}{2}} \W_p(\nu,\dd_{\bf0})^p,\\
\big|X(t)\big|^{p-2}&\le  \gamma_3  \big|X(t)\big|^p+\ff{2}{p} \Big(\ff{p-2}{p\gamma_3}\Big)^{\ff{p-2}{2}}.
\end{split}
\end{equation}
Note that these inequalities also hold in case $p=2$ (due to our convention that $0^0=1$).
Thus, we derive from \eqref{s-2} that
\begin{equation*}
\begin{split}
\d \big(\e^{\aa t}|X(t)|^p\big)&\le \e^{\aa t}\Big\{ \Theta(\alpha,\gg_1,\gg_2,\gg_3)\big|X(t)\big|^p  + \big(\lambda_2 +\lambda_4  (p-1)   \big)  \Big(\ff{p-2}{p\gamma_1}\Big)^{\ff{p-2}{2}}
 \|X_t\|_\infty^p     \\
 &\quad   +p\lambda_0\Big(\ff{p-2}{p\gamma_3}\Big)^{\ff{p-2}{2}}+   \big(\lambda_3 +\lambda_5  (p-1)   \big)   \Big(\ff{p-2}{p\gamma_2}\Big)^{\ff{p-2}{2}} \W_p(\nu,\dd_{\bf0})^p \Big\}\,\d t +\d N(t),
\end{split}
\end{equation*}
in which
$$\Theta(\alpha,\gg_1,\gg_2,\gg_3):=\alpha- \ff{ p\lambda_1}{2 }+\frac {p^2\lambda_0\gamma_3}2+\ff{p\gamma_1}{2}\big(\lambda_2 +\lambda_4  (p-1)   \big)+\ff{p\gamma_2}{2}\big(\lambda_3 +\lambda_5  (p-1)   \big).$$
We will use the following basic fact: if $f,\tilde f:[a,b] \to [0,\infty)$ satisfy
$$
\sup_{a\le r\le b}\big(f(r)+\tilde f(r)\big)\le A,
$$
$\tilde f$ is non-decreasing, and $\kappa \in (0,1)$, then
\begin{equation*}
\sup_{a\le r\le b}f(r)\le A/\kappa-\big(1/\kappa-1\big)\tilde f(b).
\end{equation*}
In the following computation, it is at first not clear that the expected values are finite. This can, however, be shown by a stopping argument like in the proof of Lemma \ref{Lem}.

Since $\Theta(\alpha,\gg_1,\gg_2,\gg_3)<0$ due to \eqref{s-5}, we have
\begin{equation}\label{s-44}
\begin{split}
g(t):&=\E\Big(\sup_{(t-r_0)^+\le r\le t}\big(\e^{\aa r}|X(r)|^p\big)\Big)\\
&\le\frac{1}{\vv}\E|X(0)|^p+\frac{1}{\vv}\big(\lambda_2 +\lambda_4  (p-1)   \big)  \Big(\ff{p-2}{p\gamma_1}\Big)^{\ff{p-2}{2}}\int_0^t
 \e^{\aa s}\E\|X_s\|_\infty^p\,\d s\\
&\quad+\ff{1}{ \vv}\bigg(p\lambda_0\Big(\ff{p-2}{p\gamma_3}\Big)^{\ff{p-2}{2}}    +   \big(\lambda_3 +\lambda_5  (p-1)   \big)   \Big(\ff{p-2}{p\gamma_2}\Big)^{\ff{p-2}{2}} \W_p(\nu,\dd_{\bf0})^p\bigg)\int_0^t\e^{\aa s}\,\d s\\
 &\quad+\ff{1-\vv}{\vv}\Theta(\alpha,\gg_1,\gg_2,\gg_3) \int_{(t-r_0)^+}^t\e^{\alpha s}\E|X(s)|^p\,\d s \\
 &\quad+\frac{1}{\vv}\E \Big(\sup_{(t-r_0)^+\le r\le t}N(r)\Big).
\end{split}
\end{equation}
Next,
applying Proposition \ref{ose} and Young's inequality, we obtain
\begin{equation*}\label{s-1}
\begin{split}
  &\E \Big(\sup_{(t-r_0)^+\le r\le t}N(r)\Big)\\
  &\le \chi  p\E\bigg(\sup_{(t-r_0)^+\le r\le t }\big(\e^{\aa r}|X(r)|^p\big)\int_{(t-r_0)^+}^t\e^{\aa s}|X(s)|^{p-2}\|\si(X_s,\nu)\|_{\rm HS}^2\,\d s\bigg)^{1/2}\\
&\le \chi p \bigg\{\ff{\vv}{2\chi p} g(t)+\frac{\chi p}{2\vv}  \E\int_{(t-r_0)^+}^t\e^{\aa s}|X(s)|^{p-2}\|\si(X_s,\nu)\|_{\rm HS}^2\,\d s\bigg\}\\
&\le \ff{\vv}{2}g(t)  +\frac{\chi^2p^2}{2\vv} \E\int_{(t-r_0)^+}^t\e^{\aa s}|X(s)|^{p-2} \big( \lambda_0   + \lambda_4    \|X_s\|_\infty^2 + \lambda_5 \W_p(\nu,\dd_{\bf0})^2  \big) \,\d s\\
 &\le \ff{\vv}{2}g(t)  +\frac{\chi^2 p^2}{2\vv} \int_{(t-r_0)^+}^t\e^{\aa s}\Big( \big(\lambda_0 \gamma_3   + \lambda_4\gg_1 + \lambda_5 \gamma_2\big) \E|X(s)|^p +\frac{2\lambda_4}{p}\Big(\ff{p-2}{p\gamma_1}\Big)^{\ff{p-2}{2}}
 \E\|X_s\|_\infty^p\\
  &\quad +\ff{2\lambda_5 }{p} \Big(\ff{p-2}{p\gamma_2}\Big)^{\ff{p-2}{2}} \W_p(\nu,\dd_{\bf0})^p+\ff{2\lambda_0}{p} \Big(\ff{p-2}{p\gamma_3}\Big)^{\ff{p-2}{2}}\Big) \,\d s,
\end{split}
\end{equation*}
where the last inequality is due to \eqref{a-0-0}.
Whence,  we deduce from \eqref{s-44} that
\begin{equation*}
\begin{split}
g(t)
  &\le \frac{1}{\vv}\E|X(0)|^p +\big(\kk_3(\vv,\gg_3)+\kappa_2(\vv,\gg_2) \W_p(\nu,\dd_{\bf0})^p \big) \int_0^t \e^{\aa s} \,\d s \\
   &\quad+ \phi(\vv,\aa,\gg_1,\gg_2,\gg_3) \int_{(t-r_0)^+}^t \e^{\aa s}\E |X(s)|^p\,\d s   \\
  &\qquad + \kappa_1(\vv,\gg_1)  \int_0^t \e^{\aa s}\E\|X_s\|_\infty^p \,\d s    +  \ff{1}{2} g(t),
\end{split}
\end{equation*}
 where  $ \phi(\vv,\aa_1,\gg_1,\gg_2,\gg_3)$ is  given in \eqref{s-5}.
Since $ \phi(\vv,\aa_1,\gg_1,\gg_2,\gg_3)<0$, we obtain
\begin{equation*}\label{s-3}
\begin{split}
 g(t)
&\le \frac{2}{\vv}\E|X(0)|^p  + \ff{2}{\aa}\big(\kk_3(\vv,\gg_3)+\kappa_2(\vv,\gg_2) \W_p(\nu,\dd_{\bf0})^p \big)  (\e^{\aa t}-1)\\
&\quad+2 \kappa_1(\vv,\gg_1) \int_0^t \e^{\aa s} \E\Big(\sup_{s-r_0\le u \le s}|X(u)|^p \Big)\,\d s .
\end{split}
\end{equation*}
This, together with
\begin{equation*}
\begin{split}
\int_0^t \e^{\aa s}  \E\Big(\sup_{s-r_0\le u \le s}|X(u)|^p \Big)\,\d s&\le \e^{\aa r_0}\int_0^t   \E\Big(\sup_{s-r_0\le u \le s}(\e^{\aa u}|X(u)|^p) \Big)\,\d s\\
&\le  r_0\e^{\aa r_0}    \E\|X_0\|^p_\8   + \e^{\aa r_0}\int_0^t  g(s)\,\d s,
\end{split}
\end{equation*}
yields
$$
g(t) \le \Phi(t)+ 2\kappa_1(\vv,\gg_1) \e^{\alpha r_0}  \int_0^t g(u)\,\d u,
$$
where
\begin{equation}\label{s-4}
\begin{split}
\Phi(t):&= \ff{2}{\aa}\big(\kk_3(\vv,\gg_3)+\kappa_2(\vv,\gg_2) \W_p(\nu,\dd_{\bf0})^p \big)  \e^{\aa t}\\
  &\hspace{1cm}+2\Big(\frac{1}{\vv}\E|X(0)|^p +  \kappa_1 (\vv,\gg_1)\e^{\alpha r_0} r_0 \E  \|X_0\|_\infty^p\Big)\\
  & =:A\e^{\alpha t}+B.
  \end{split}
\end{equation}
Applying Gronwall's inequality, we get
\begin{equation}\label{geee}
g(t) \le \Phi(t)+2 \kappa_1 (\vv,\gg_1)\e^{\alpha r_0}\int_0^t \Phi(s)\e^{2  \kappa_1 (\vv,\gg_1) \e^{\alpha r_0}(t-s)}\,\d s.
\end{equation}
Observe that
\begin{align*}
\E\|X_t\|_{\infty}^p&\le \e^{-\aa(t-r_0)}\E\Big(\sup_{t-r_0\le s\le t}(\e^{\aa s}|X(s)|^p)\Big)\\
&\le \e^{-\aa(t-r_0)}\E\Big(\sup_{ t-r_0  \le s\le (t-r_0)^+}(\e^{\aa s}|X(s)|^p)\vee \sup_{(t-r_0)^+\le s\le t}(\e^{\aa s}|X(s)|^p)\Big)\\
&\le \e^{-\aa(t-r_0)}\big(\E\|X_0\|^p_\8+ g(t)\big)
\end{align*}
This, together with  \eqref{geee} as well as $\aa>2  \kappa_1 (\vv,\gg_1)\e^{\alpha r_0}$ in view of \eqref{ss-2}, implies
\begin{align*}
\E \|X_t\|_{\infty}^p &\le \e^{-\aa(t-r_0)}\Big(\E\|X_0\|^p_\8+ A\e^{\alpha t}+B\\
&\quad\quad\quad\qquad\quad+2 \kappa_1 (\vv,\gg_1) \e^{\alpha r_0}\int_0^t \big(A\e^{\alpha s}+B\big)\e^{2  \kappa_1 (\vv,\gg_1) \e^{\alpha r_0}(t-s)}\,\d s\Big)\\
&\le  \ff{ A  \aa \e^{ \alpha r_0}}{\aa-2  \kappa_1 (\vv,\gg_1) \e^{\alpha r_0}} +\big(\E\|X_0\|^p_\8+B\big) \e^{-\aa(t-r_0)}+ B\e^{\alpha r_0}\e^{-(\aa-2  \kappa_1 (\vv,\gg_1) \e^{\alpha r_0})t}\\
  &\le \ff{ A  \aa \e^{ \alpha r_0}}{\aa-2  \kappa_1 (\vv,\gg_1) \e^{\alpha r_0}}+\e^{\alpha r_0}\big(\E\|X_0\|^p_\8+2B\big)\e^{-(\aa-2 \kappa_1(\vv,\gg_1) \e^{\alpha r_0})t}.
\end{align*}
 As a result, \eqref{ss-3} follows by inserting the expressions of $A$ and $B$, given in \eqref{s-4}.
\end{proof}

Our next goal is to find a suitable compact subset $\mathcal K$ of  $\mathscr P_p(\C)$ to which we can apply Kakutani's fixed point theorem to obtain an IPM
of the Markov process generated by \eqref{E1}. Since a subset of $\mathscr P_p(\C)$ which is relatively compact with respect to the weak (sometimes called {\em narrow}) topology on the space
of probability measures on
$\C$ is not necessarily relatively compact in $\mathscr P_p(\C)$ endowed with the $\W_p$-Wasserstein distance, we have to work with different values of $p$ in the following. So far, we fixed the value of $p \ge 2$ and we will continue
to regard $p$ as fixed. Observe however, that
if $({\bf H_1})$, $({\bf H_2})$, and $({\bf H_3})$ hold for  a given $p\ge 2$, then they also hold for $p$ replaced by $q>p$ (with the same constants) by restricting the domain of definition of $b$ and $\sigma$
accordingly. Now we assume that the set $\mathcal A$  (defined in terms of $p$) is non-empty and we fix $(\varepsilon,\,\alpha,\,\gg_1,\,\gg_2)\in \mathcal A$;
then the same quadruple is also in the set $\mathcal A$ defined with respect to $q$ instead of $p$ provided that $q-p>0$ is
sufficiently small. This holds true since the functions $\psi$, $\kappa_1$ and $\kappa_2$ depend continuously on $p$.  From now on we fix such a quadruple and  $q>p$.
Then, in particular, Lemma \ref{importantlemma} holds for $q$. To avoid confusion, we write $\tilde \kappa_1$ if $p$ is replaced by $q$ and similarly for other functions.

For
\begin{equation}\label{f-3}
M\ge M_0:= \ff{2\tilde \kk_3(\vv,\gg_3)\e^{\aa r_0}}{\aa-2\e^{\aa r_0}(\tilde \kk_1(\vv,\gg_1)+\tilde \kk_2(\vv,\gg_2))}\vee \E\big[\|X_{r_0}^{\delta_0,\delta_0}\|_{\infty}^q\big],
\end{equation}
let
$$
\M_M=\big\{\mu \in \mathscr P_q(\C):\,\mu\big(\|\cdot\|_\infty^q\big) \le M\big\},
$$
and
$$
\mathcal K_0:=\M_M\cap\big\{\L_{X^{\mu,\nu}_{r_0}},\mu,\nu\in\M_M\big\}.
$$
Note that $\M_M$ is closed in  $\mathscr P_q(\C)$ and hence also in  $\mathscr P_p(\C)$ and that     $\mathcal K_0 \neq \emptyset$ since $\L_{X^{\delta_0,\delta_0}_{r_0}}\in \mathcal K_0$.
For   $\mu,\nu\in\mathscr P_p(\C)$,  write $\mu_t^\nu =\L_{X^{\mu,\nu}_t}$ for notational brevity and set
\begin{equation*}
\LL_\nu:=\big\{\mu\in\mathscr P_p(\C): \mu_t^\nu=\mu,~~~t\ge0\big\},
\end{equation*}
which is the collection  of all IPMs of $(X_t^{.,\nu})_{t\ge0}$ solving \eqref{E2}.
We will see in the next lemma, that we automatically obtain $\LL_\nu \subseteq \mathscr P_q(\C)$ if  $\nu\in\M_M$.

\begin{lem}\label{lemmma}
Assume $({\bf H_1})$, $({\bf H_2})$, $({\bf H_3})$ and assume that $\mathcal A\neq\emptyset$. Choose $q>p$ and $M_0$ as above and fix $M \ge M_0$. Then  $\LL_\nu\subseteq \mathcal K_0$ for any
 $\nu\in\M_M$.
\end{lem}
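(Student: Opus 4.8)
Fix $\nu\in\M_M$ and an arbitrary $\mu\in\LL_\nu$; the goal is $\mu\in\mathcal K_0$. The key preliminary observation is that this reduces to one moment bound. Since $\mu$ is an IPM of $(X^{\cdot,\nu}_t)_{t\ge0}$, evaluating the invariance relation at $t=r_0$ gives $\mu=\mu_{r_0}^\nu=\L_{X^{\mu,\nu}_{r_0}}$; hence, once it is known that $\mu\in\M_M$, taking $\mu'=\mu$ and $\nu'=\nu$ shows $\mu\in\{\L_{X^{\mu',\nu'}_{r_0}}:\mu',\nu'\in\M_M\}$, so $\mu\in\mathcal K_0$. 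Thus the whole task is to prove $\mu(\|\cdot\|_\8^q)\le M$; note that a priori it is not even clear that this $q$-th moment is finite, since we only know $\mu\in\mathscr P_p(\C)$.

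To access the moment I would use stationarity through the Feller transition semigroup $(P^\nu_t)_{t\ge0}$ of \eqref{E2}: invariance means $\mu=\mu P^\nu_t$ for every $t\ge0$, hence for any bounded continuous $g\ge0$, $\mu(g)=\int_\C\E[g(X^{\delta_\xi,\nu}_t)]\,\mu(\d\xi)$ for all $t\ge0$. The plan is to apply this with $g=g_L:=\|\cdot\|_\8^q\wedge L\in C_b(\C)$, $L>0$, and to bound $\E[g_L(X^{\delta_\xi,\nu}_t)]\le\E\|X^{\delta_\xi,\nu}_t\|_\8^q$ by Lemma \ref{importantlemma} with $q$ in place of $p$ and initial law $\delta_\xi$. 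This is licit: $\delta_\xi\in\mathscr P_q(\C)$ for every $\xi\in\C$ (so there is no finiteness issue in applying the lemma), and the fixed quadruple $(\vv,\aa,\gg_1,\gg_2)$ belongs to $\mathcal A\subseteq\mathcal U$ taken with respect to $q$. Then \eqref{ss-3} (for $q$, with $\mu=\delta_\xi$) reads
\[\E\|X^{\delta_\xi,\nu}_t\|_\8^q\le D+C'\|\xi\|_\8^q\,\e^{-ct},\qquad t\ge0,\ \xi\in\C,\]
with $c:=\aa-2\tilde\kk_1(\vv,\gg_1)\e^{\aa r_0}>0$, $C':=\e^{\aa r_0}\big(1+4/\vv+4\tilde\kk_1(\vv,\gg_1)r_0\e^{\aa r_0}\big)$, and $D:=\tfrac{2\e^{\aa r_0}}{c}\big(\tilde\kk_3(\vv,\gg_3)+\tilde\kk_2(\vv,\gg_2)\W_q(\nu,\dd_{\bf0})^q\big)$. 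Since $\nu\in\M_M$ gives $\W_q(\nu,\dd_{\bf0})^q=\nu(\|\cdot\|_\8^q)\le M$, we get $D\le\bar D:=\tfrac{2\e^{\aa r_0}}{c}\big(\tilde\kk_3(\vv,\gg_3)+\tilde\kk_2(\vv,\gg_2)M\big)$. Splitting the $\mu$-integral over $\{\|\xi\|_\8\le N\}$ and its complement (on which $g_L\le L$) yields, for all $t\ge0$, $N\in\N$, $L>0$,
\[\mu\big(\|\cdot\|_\8^q\wedge L\big)\le\bar D+C'N^q\,\e^{-ct}+L\,\mu\big(\|\cdot\|_\8>N\big).\]

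Next I would send the parameters to infinity one at a time --- legitimate because the left-hand side depends on neither $t$ nor $N$: letting $t\to\8$ deletes the transient term; letting $N\to\8$ deletes the tail term (using that $\mu$ is a probability measure and $\|\xi\|_\8<\8$ for each $\xi\in\C$); and letting $L\to\8$ with monotone convergence gives $\mu(\|\cdot\|_\8^q)\le\bar D<\8$, in particular $\mu\in\mathscr P_q(\C)$. Finally, $\bar D\le M$ is an elementary rearrangement: because the quadruple lies in $\mathcal A$ for $q$ we have $\aa-2\e^{\aa r_0}\big(\tilde\kk_1(\vv,\gg_1)+\tilde\kk_2(\vv,\gg_2)\big)>0$, and $\bar D\le M$ is then equivalent to $2\e^{\aa r_0}\tilde\kk_3(\vv,\gg_3)\le M\big(\aa-2\e^{\aa r_0}(\tilde\kk_1(\vv,\gg_1)+\tilde\kk_2(\vv,\gg_2))\big)$, which holds since $M\ge M_0\ge\tfrac{2\e^{\aa r_0}\tilde\kk_3(\vv,\gg_3)}{\aa-2\e^{\aa r_0}(\tilde\kk_1(\vv,\gg_1)+\tilde\kk_2(\vv,\gg_2))}$ by \eqref{f-3}. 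Hence $\mu\in\M_M$, and by the first paragraph $\mu=\L_{X^{\mu,\nu}_{r_0}}\in\mathcal K_0$.

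The main obstacle is exactly the circularity noted above: one cannot simply plug $\mu$ into the $q$-version of Lemma \ref{importantlemma}, because its right-hand side already contains $\mu(\|\cdot\|_\8^q)$, which is not yet known to be finite. Running the estimate on the point masses $\delta_\xi$ and integrating them against $\mu$ via stationarity, together with the double truncation ($\wedge L$ in the test function and $\|\xi\|_\8\le N$ in the integration domain) and the three successive limits, is precisely what breaks the circle. Once $\mu\in\mathscr P_q(\C)$ has been secured, everything remaining is bookkeeping with the constants already supplied by Lemma \ref{importantlemma}.
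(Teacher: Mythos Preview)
Your proof is correct and follows essentially the same approach as the paper: invoke invariance against truncated $q$-th moment test functions, apply the $q$-version of Lemma~\ref{importantlemma} with point-mass initial data $\delta_\xi$, and pass to the limit to obtain $\mu(\|\cdot\|_\8^q)\le M$, then observe $\mu=\L_{X^{\mu,\nu}_{r_0}}$. The only cosmetic difference is in the truncation device: the paper uses a single cutoff $\|\cdot\|_\8^q\wedge N$ together with Jensen's inequality for the concave map $x\mapsto N\wedge x$ (so only two limits, $t\to\infty$ via dominated convergence and $N\to\infty$ via monotone convergence), whereas you use the double truncation $\wedge L$ on the test function and $\|\xi\|_\8\le N$ on the integration domain; both work.
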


\begin{proof}
  Fix $\nu\in\M_M$. We write $(X_t^{\xi,\nu})_{t\ge0}$ in lieu of $(X_t^{\mu,\nu})_{t\ge0}$ when the initial distribution is $\mu=\dd_\xi$ for $\xi\in\C.$ Due to our assumption on $q$, we can
  apply Lemma \ref{importantlemma} with $q$ and obtain, for each
$\rho_\nu\in\LL_\nu$, $N>0$, and a suitable function $\beta(t),\,t \ge 0$, decreasing to 0, that
\begin{equation*}
\begin{split}
\int_\C \big( \|\xi\|_\8^q&\wedge N\big)\,\rho_\nu(\d\xi)=\int_\C\E\big(\|X_t^{\xi,\nu}\|_\8^q\wedge N\big)\,\rho_\nu(\d\xi)\\
&\le \int_\C\big(N\wedge\E\|X_t^{\xi,\nu}\|_\8^q \big)\,\rho_\nu(\d\xi)\\
&\le \ff{2  \e^{ \alpha r_0}}{\aa-2  \tilde \kappa_1 (\vv,\gg_1) \e^{\alpha r_0}}\big(\tilde \kk_3(\vv,\gg_3)+\tilde \kappa_2(\vv,\gg_2)  \W_q(\nu,\dd_{\bf0})^q  \big)\\
&\quad+\int_\C N \wedge \Big(\beta (t)|\xi\|^q_\8\Big)\,\rho_\nu(\d\xi),
\end{split}
\end{equation*}
where  the identity is due to the invariance of $\rho_\nu$ and the first inequality  holds true by  Jensen's inequality since $x\mapsto N\wedge x$ is a concave function.
Then, taking $t\to\8$  and using  the
Lebesgue dominated convergence theorem, one has
\begin{equation*}
\int_\C \big( \|\xi\|_\8^q\wedge N\big)\,\rho_\nu(\d\xi)\le\ff{2  \e^{ \alpha r_0}}{\aa-2  \tilde \kappa_1 (\vv,\gg_1) \e^{\alpha r_0}}\big(\tilde \kk_3(\vv,\gg_3)+\tilde \kappa_2(\vv,\gg_2)  \W_q(\nu,\dd_{\bf0})^q \big).
\end{equation*}
Applying the   monotone convergence theorem, we get
\begin{equation*}
  \int_\C   \|\xi\|_\8^q \,\rho_\nu(\d\xi)
\le\ff{2  \e^{ \alpha r_0}}{\aa-2  \tilde \kappa_1 (\vv,\gg_1) \e^{\alpha r_0}}\big(\tilde \kk_3(\vv,\gg_3)+\tilde \kappa_2(\vv,\gg_2)  \W_q(\nu,\dd_{\bf0})^q \big)=:C(\nu).
\end{equation*}
According to \eqref{f-3}, we infer  $C(\nu)\le M$ so that $\rho_\nu\in\M_M$ for $\nu\in\M_M $ and therefore
 $\LL_\nu\subseteq\M_M$. On the other hand, according to the structure of $\LL_\nu$, one has
$$\LL_\nu\subseteq\big\{\L_{X_{r_0}^{\mu,\nu}},\mu,\nu\in\M_M\big\}.$$
Thus, we arrive at $\LL_\nu\subseteq \mathcal K_0$ and the desired assertion follows.
\end{proof}

\begin{lem}\label{compact}
Under the same assumptions as in the previous lemma the closure $\overline{\mathcal K_0}$ of the set ${\mathcal K_0}$ in $\mathscr P_p(\C)$ is a compact subset of $\mathscr P_p(\C)$.
\end{lem}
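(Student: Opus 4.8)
The plan is to show that $\mathcal K_0$ is relatively compact in $(\mathscr P_p(\C),\W_p)$, which is precisely the assertion that $\overline{\mathcal K_0}$ is compact. I would use the standard characterization: a set $\mathcal S\subseteq\mathscr P_p(\C)$ is relatively compact in $(\mathscr P_p(\C),\W_p)$ if and only if $\mathcal S$ is tight as a family of Borel probability measures on $\C$ (equivalently, by Prokhorov's theorem, $\C$ being Polish, relatively compact for the weak topology) \emph{and} the $p$-th moments are uniformly integrable, i.e.\ $\sup_{\mu\in\mathcal S}\int_{\{\|\xi\|_\8>R\}}\|\xi\|_\8^p\,\mu(\d\xi)\to0$ as $R\to\8$ (see, e.g., \cite[Theorem~6.9]{Villani} combined with Prokhorov's theorem). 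The uniform-integrability part is free here: by construction $\mathcal K_0\subseteq\M_M$ and $q>p$, so Hölder's inequality gives $\int_{\{\|\xi\|_\8>R\}}\|\xi\|_\8^p\,\mu(\d\xi)\le R^{p-q}\mu(\|\cdot\|_\8^q)\le R^{p-q}M$ for every $\mu\in\mathcal K_0$. It therefore remains only to show that the family of laws $\{\L_{X^{\mu,\nu}_{r_0}}:\mu,\nu\in\M_M\}$ (which contains $\mathcal K_0$) is tight on $\C$.

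For tightness I would invoke the Arzel\`a--Ascoli-based criterion on $\C=C([-r_0,0];\R^d)$: a family $(Y_i)$ of $\C$-valued random variables is tight iff (a) $\sup_i\P(\|Y_i\|_\8>R)\to0$ as $R\to\8$, and (b) for every $\eta>0$, $\sup_i\P\big(w(Y_i;\dd)>\eta\big)\to0$ as $\dd\downarrow0$, where $w(f;\dd):=\sup_{|x-y|\le\dd}|f(x)-f(y)|$. Identifying $X^{\mu,\nu}_{r_0}(\theta)=X^{\mu,\nu}(r_0+\theta)$, both quantities are functionals of the path $(X^{\mu,\nu}(s))_{0\le s\le r_0}$, and $\sup_{0\le u\le r_0}\|X^{\mu,\nu}_u\|_\8=\sup_{-r_0\le v\le r_0}|X^{\mu,\nu}(v)|=\|X^{\mu,\nu}_0\|_\8\vee\|X^{\mu,\nu}_{r_0}\|_\8$. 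Condition (a) is then immediate from Lemma~\ref{importantlemma} applied with $q$ in place of $p$: since $\mu,\nu\in\M_M$ we have $\mu(\|\cdot\|_\8^q)\le M$ and $\W_q(\nu,\dd_{\bf0})^q=\nu(\|\cdot\|_\8^q)\le M$, so the right-hand side of the $q$-version of \eqref{ss-3} at $t=r_0$ is bounded by a constant $C_1$ independent of $\mu,\nu$; hence $\P(\|X^{\mu,\nu}_{r_0}\|_\8>R)\le C_1R^{-q}$ and, by the displayed identity, $\E\big(\sup_{0\le u\le r_0}\|X^{\mu,\nu}_u\|_\8\big)^q\le M+C_1=:C_1'$ uniformly in $\mu,\nu\in\M_M$.

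The main work, and the only genuinely non-routine point, is condition (b), which is delicate because $({\bf H_1})$ only bounds $b$ and $\si$ on \emph{bounded} subsets of $\C\times\mathscr P_p(\C)$; I would handle this by localization. Since $\M_M$ is bounded in $(\mathscr P_p(\C),\W_p)$ (as $\W_p(\rho,\dd_{\bf0})^p=\rho(\|\cdot\|_\8^p)\le M^{p/q}$), $({\bf H_1})$ provides, for each $R>0$, a finite constant $C_R:=\sup\{|b(\xi,\rho)|\vee\|\si(\xi,\rho)\|_{\rm HS}:\|\xi\|_\8\le R,\ \rho\in\M_M\}$. Put $\tau_R=\tau_R^{\mu,\nu}:=\inf\{u\ge0:\|X^{\mu,\nu}_u\|_\8\ge R\}$; then, for $0\le s<t\le r_0$,
$$X^{\mu,\nu}(t\wedge\tau_R)-X^{\mu,\nu}(s\wedge\tau_R)=\int_s^t\1_{[0,\tau_R)}(u)\,b(X^{\mu,\nu}_u,\nu)\,\d u+m(t)-m(s),\qquad m(t):=\int_0^t\1_{[0,\tau_R)}(u)\,\si(X^{\mu,\nu}_u,\nu)\,\d W(u),$$
whose integrands are bounded in norm by $C_R$, so the drift part contributes at most $C_R\dd$ to the modulus, while $m$ is a continuous martingale with $[m,m]_t-[m,m]_s\le C_R^2(t-s)$. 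The Burkholder--Davis--Gundy inequality then gives $\E|m(t)-m(s)|^q\le c_qC_R^q|t-s|^{q/2}$, and since $q>p\ge2$ (so $q/2>1$) the Kolmogorov--Chentsov estimate yields $\E\big(w(m;\dd)\big)^q\le C(q,\gamma,r_0)\,c_qC_R^q\,\dd^{\gamma q}$ for any fixed $\gamma\in(0,\tfrac12-\tfrac1q)$. On $\{\tau_R>r_0\}$ the path $X^{\mu,\nu}_{r_0}$ agrees with the corresponding restriction of the stopped path, so $w(X^{\mu,\nu}_{r_0};\dd)\le C_R\dd+w(m;\dd)$, and thus, for $\dd\le\eta/(2C_R)$,
$$\P\big(w(X^{\mu,\nu}_{r_0};\dd)>\eta\big)\le\P(\tau_R\le r_0)+\P\big(w(m;\dd)>\tfrac\eta2\big)\le C_1'R^{-q}+(2/\eta)^q\,C(q,\gamma,r_0)\,c_qC_R^q\,\dd^{\gamma q},$$
where $\P(\tau_R\le r_0)=\P\big(\sup_{0\le u\le r_0}\|X^{\mu,\nu}_u\|_\8\ge R\big)\le C_1'R^{-q}$ by Markov's inequality. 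Letting $\dd\downarrow0$ and then $R\to\8$ gives $\lim_{\dd\downarrow0}\sup_{\mu,\nu\in\M_M}\P\big(w(X^{\mu,\nu}_{r_0};\dd)>\eta\big)=0$, i.e.\ condition (b). Conditions (a) and (b) together give tightness of $\{\L_{X^{\mu,\nu}_{r_0}}:\mu,\nu\in\M_M\}$, hence of $\mathcal K_0$; combined with the uniform $p$-integrability of the first paragraph this shows $\mathcal K_0$ is relatively compact in $(\mathscr P_p(\C),\W_p)$, which is exactly the statement that $\overline{\mathcal K_0}$ is compact. The only delicate ingredient is the localization in (b); the rest is a routine combination of Lemma~\ref{importantlemma} (used with $q$) with standard compactness criteria on $C$-spaces and Wasserstein spaces.
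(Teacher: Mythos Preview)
Your proof is correct and follows essentially the same route as the paper's: reduce relative compactness in $\mathscr P_p(\C)$ to a uniform higher-moment bound plus tightness in $\mathscr P(\C)$, obtain the moment bound from Lemma~\ref{importantlemma} applied with $q$, and verify tightness via the Arzel\`a--Ascoli criterion using a stopping-time localization together with $({\bf H_1})$. The only cosmetic difference is how the stochastic-integral modulus is controlled---the paper invokes a time-change-to-Brownian-motion argument while you use BDG plus Kolmogorov--Chentsov---but both are standard and equivalent here.
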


\begin{proof}
The set $\overline{\mathcal K_0}$ is a closed subset of $\mathscr P_p(\C)$, so it suffices to show that
$\big\{\L_{X_{r_0}^{\mu,\nu}},\mu,\nu\in\M_M\big\}$ is a relatively compact set in $\mathscr P_p(\C)$ which follows once we have verified  the following two  conditions
(see \cite[Proposition 7.1.5 and (5.1.20)]{AGS})
\begin{enumerate}
\item[(a)]
\begin{equation*}
\sup_{\mu,\nu\in\M_M}\E\|X_{r_0}^{\mu,\nu}\|_\8^q<\infty,
\end{equation*}
  \item[(b)]
\begin{equation*}
  \big\{\L_{X_{r_0}^{\mu,\nu}},\mu,\nu\in\M_M\big\} \mbox{ is a tight subset of } \mathscr{P}(\C).
\end{equation*}
\end{enumerate}

Condition (a) is an immediate consequence of Lemma \ref{importantlemma} with $p$ replaced by $q$. Then,  (b) follows once we have verified the following property
(see e.g. \cite[Theorem 7.3, p.82]{Bill})

\begin{equation}\label{modulus}
\lim_{\dd\to0}\sup_{\mu,\nu\in\M_M}\P\Big(\sup_{0\le u\le v\le r_0, v-u\le \dd}|X^{\mu,\nu}(v)- X^{\mu,\nu}(u)|\ge\gg\Big)=0,\;\mbox{ for every } \gamma>0.
\end{equation}

By Chebyshev's inequality, for $\mu,\nu\in\M_M$, we obtain from \eqref{ss-3} that there exists a constant $c_1>0$ (dependent on $M$ but independent of $R$, $\mu$, and $\nu$) such that
\begin{equation*}
M^{\mu,\nu}_R:=\P\big(\|X^{\mu,\nu}_{r_0}\|_\8\ge R\big)+\P\big(\|X^{\mu,\nu}_0\|_\8\ge R\big)\le \ff{1}{R^p}\big\{\E\|X^{\mu,\nu}_{r_0}\|_\8^p+\mu(\|\cdot\|_\8^p)\big\} \le \ff{c_1}{R^p}.
\end{equation*}
Whence, for  any  $\vv_0>0$, there exists an $R_0=R_0(\vv_0,c_1,p)$ sufficiently large such that
\begin{equation}\label{A4}
\sup_{\mu,\nu\in\M_M}M^{\mu,\nu}_{R_0}\le \vv_0.
\end{equation}
Fix $\vv_0>0$ and $R_0>0$ satisfying \eqref{A4} and define the stopping time
$$
\tau^{\mu,\nu}:=\inf\big\{t \ge -r_0:\,\big|X^{\mu,\nu}(t)\big|>R_0\big\}.
$$
Then $\P\big(\tau^{\mu,\nu}\le r_0\big)\le \vv_0$ and, for $\gamma>0$, we get
\begin{equation*}
\begin{split}
  \P&\Big(\sup_{0\le u\le v\le r_0, v-u\le \dd}|X^{\mu,\nu}(v)- X^{\mu,\nu}(u)|\ge\gg\Big)\\
  &\le \P\big(\tau^{\mu,\nu}\le r_0\big) + \P\Big(\sup_{0\le u\le v\le r_0, v-u\le \dd} \int_u^v\1_{(0,\tau^{\mu,\nu}]} (s) |b(X^{\mu,\nu}_s,\nu)|\d s \ge\ff{\gg}{2}\Big)\\
&\quad+ \P\bigg(\sup_{0\le u\le v\le r_0, v-u\le \dd}\Big|\int_u^v \1_{(0,\tau^{\mu,\nu}]} (s)   \si(X^{\mu,\nu}_s,\nu)\d W( s)\Big|\ge\ff{\gg}{2}\bigg)\\
&=: \P\big(\tau^{\mu,\nu}\le r_0\big) +N^{\mu,\nu}(\dd)    +J^{\mu,\nu}(\dd).
\end{split}
\end{equation*}

Since $b$ is bounded on bounded subsets of $\C\times\mathscr P_p(\C)$    by $({\bf H_1})$, we obtain
\begin{equation}\label{A7}
\lim_{\dd\to0}\sup_{\mu,\nu\in\M_M}N^{\mu,\nu}(\dd)=0.
\end{equation}
If we further have
\begin{equation}\label{A6}
\lim_{\dd\to0}\sup_{\mu,\nu\in\M_M}J^{\mu,\nu}(\dd)=0.
\end{equation}
then the claim (b) holds by combining \eqref{A4}, \eqref{A7}, and  \eqref{A6} and by invoking the fact that $\vv_0>0$ is arbitrary. It remains to show \eqref{A6}.
This is in fact a standard argument (see, e.g.~ \cite{ESV}) since the integrand of the stochastic integral is bounded uniformly in $s,\,\mu,\,\nu,\,\omega$, so each component of the stochastic integral is a time-changed
Brownian motion where the time change process has a derivative which is uniformly bounded away from 0. Therefore, \eqref{A6} follows and the proof of the lemma is complete.
\end{proof}

\section{Proof of Theorem \ref{main}}\label{sec3}
The key idea of the proof of Theorem \ref{main} is to apply the following Kakutani's fixed point theorem for multivalued maps (see e.g.~\cite[Lemma 20.1, p20]{OR}):
\begin{lem}\label{ThA}
Let $\mathcal K$ be  a non-empty convex compact subset of a locally convex topological vector space and assume that $ \GG: \mathcal K\to 2^{\mathcal K}$ is a set-valued function $($where
$2^{\mathcal K}$ is the set of subsets of $\mathcal K$$)$ satisfying the following conditions:
\begin{enumerate}
\item[$(a)$] $\GG(x)$ is nonempty and convex for every $x\in \mathcal K$;

\item[$(b)$] The graph $Gr(\GG):=\{(x, y)\in \mathcal K\times \mathcal K : y\in\GG(x)\}$ is a closed subset of $\mathcal K \times \mathcal K$.
\end{enumerate}
Then,   $\GG$ admits a fixed point $\xi^*\in \mathcal K$, i.e., $\xi^*\in \GG(\xi^*).$
\end{lem}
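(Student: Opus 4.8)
\emph{Strategy.} The plan is to prove Lemma \ref{ThA} (the Kakutani--Fan--Glicksberg theorem) by reducing it to the finite-dimensional Kakutani theorem, which itself rests on Brouwer's fixed point theorem, and then passing to a limit along the directed family of neighborhoods of the origin, using compactness of $\mathcal K$ together with the closed graph of $\GG$. As a preliminary observation, since $\mathcal K$ is compact and $Gr(\GG)$ is closed, the correspondence $\GG$ is automatically upper semicontinuous with closed values; this is the form of continuity I would use throughout. We assume, as is standard, that the ambient locally convex space $E$ is Hausdorff.

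\emph{The finite-dimensional base case.} First I would establish the statement when $E=\R^n$ and $\mathcal K$ is a simplex $\Delta$ (a general compact convex subset of $\R^n$ then follows by an affine/retraction argument). Fix a sequence of barycentric subdivisions of $\Delta$ whose mesh tends to $0$; for the $k$-th subdivision, choose at each vertex $v$ a point $f_k(v)\in\GG(v)$ and extend $f_k$ affinely over each subsimplex. By convexity of $\mathcal K$ the resulting map $f_k:\Delta\to\Delta$ is continuous, so Brouwer's theorem yields a fixed point $x_k=f_k(x_k)$. Writing $x_k$ as a convex combination of the vertices $v_0^k,\dots,v_n^k$ of the subsimplex containing it, all these vertices converge to a common limit as the mesh shrinks; along a convergent subsequence $x_k\to x^\ast$, the points $f_k(v_j^k)\in\GG(v_j^k)$ accumulate at points of $\GG(x^\ast)$ by the closed graph, and since $x_k$ is their convex combination and $\GG(x^\ast)$ is convex and closed, I would conclude $x^\ast\in\GG(x^\ast)$.

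\emph{Reduction to finite dimensions.} For a convex, symmetric, open neighborhood $V$ of the origin, compactness of $\mathcal K$ gives a finite cover by sets $x_i+V$ with $x_i\in\mathcal K$, $i=1,\dots,n$. Using the gauge functionals of the defining seminorms I would build a continuous partition of unity $\{\beta_i\}$ subordinate to this cover, pick $y_i\in\GG(x_i)$, and set $f_V(x)=\sum_i\beta_i(x)\,y_i$. This $f_V$ is continuous, maps $\mathcal K$ into the finite-dimensional compact convex set $\Delta_V:=\mathrm{conv}\{y_1,\dots,y_n\}\subset\mathcal K$, and restricts to a self-map of $\Delta_V$; the base case (or Brouwer directly) produces $x_V\in\Delta_V$ with $x_V=f_V(x_V)$. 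Only indices with $\beta_i(x_V)>0$ contribute, and for those $x_i\in x_V+V$; combining the convexity of the values $\GG(\cdot)$ with a uniform upper-semicontinuity estimate on $\mathcal K$, I would show that, for $V$ chosen small enough relative to a prescribed neighborhood $W$, all active $y_i$ lie in $\GG(x_V)+W$, whence $x_V\in\GG(x_V)+W$, an approximate fixed point.

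\emph{Passage to the limit and the main obstacle.} Directing the convex symmetric open neighborhoods $V$ of the origin by reverse inclusion, the net $(x_V)$ lies in the compact set $\mathcal K$ and hence has a cluster point $\xi^\ast$; choosing companion points $z_V\in\GG(x_V)$ with $x_V-z_V$ in a neighborhood shrinking to $0$, the pairs $(x_V,z_V)\in Gr(\GG)$ cluster at $(\xi^\ast,\xi^\ast)$, so $\xi^\ast\in\GG(\xi^\ast)$ since $Gr(\GG)$ is closed. The hard part will be the reduction step, and specifically its uniformity: upper semicontinuity only furnishes, at each point, a pointwise neighborhood on which $\GG$ is controlled, so converting this into a single $V=V(W)$ that works simultaneously for all of $\mathcal K$ requires a compactness (finite subcover) argument, and it is precisely here that both the convexity of the values $\GG(x)$ and the local convexity of $E$ (to form the neighborhoods $W$ and the partition of unity) are genuinely used. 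The base case and the final limiting step are comparatively routine once this uniform approximate-selection property is in hand.
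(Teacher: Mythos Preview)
The paper does not prove this lemma at all: it is stated with the parenthetical citation ``(see e.g.~\cite[Lemma 20.1, p20]{OR})'' and then simply \emph{applied} in the proof of Theorem~\ref{main}. In other words, the authors treat Kakutani's fixed point theorem as a black box imported from the literature, so there is no ``paper's own proof'' against which to compare your attempt.

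Your sketch, by contrast, outlines an actual proof of the Kakutani--Fan--Glicksberg theorem along classical lines (Brouwer $\Rightarrow$ finite-dimensional Kakutani via barycentric subdivision, then approximate selections built from a partition of unity, then a compactness/net argument to pass to the limit). The outline is broadly correct and you have identified the genuine technical point: turning pointwise upper semicontinuity into a uniform estimate $\GG(x')\subset\GG(x)+W$ whenever $x'-x\in V$, valid for all $x\in\mathcal K$, which does follow from compactness of $\mathcal K$ and closedness of $Gr(\GG)$. One small wrinkle worth tightening: in your reduction step you need that the active $y_i\in\GG(x_i)$ land in $\GG(x_V)+W$, and this uses the uniform USC with the roles of $x$ and $x'$ set so that $x_i\in x_V+V$ forces $\GG(x_i)\subset\GG(x_V)+W$; you have this right in spirit but the dependence of $V$ on $W$ (and not on $x_V$) should be fixed \emph{before} constructing $f_V$. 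For the purposes of this paper, however, none of this is needed---a citation suffices, and that is exactly what the authors do.
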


Let
 $$\mathcal M^p=\big\{\mu| \, \mu \mbox{ is a finite signed measure on }  \C \mbox{ such that } |\mu|(\|\cdot\|_\infty^p)<\8\big\},$$
which is a normed space so it is a Hausdorff locally convex space
under the  Kantorovich-Rubinstein metric
$$\mathcal W_p(\mu,\nu)=\Big(\sup_{(f,g)\in\mathscr F_{\mathrm Lip}}\big(\mu(f)-\nu(g)\big)\Big)^{1/p},\quad  \mu,\nu\in\mathcal M^p,$$
where
 \begin{equation*}
 \mathscr F_{\mathrm Lip}=\big\{(f,g): f,g \mbox{ are Lipschitz such that } f(\xi)\le g(\eta)+\|\xi-\eta\|_\infty^p,\xi,\eta\in\C\big\}.
 \end{equation*}
 Observe that
 $$\mathcal W_p(\mu,\nu)=\mathbb W_p(\mu,\nu),\quad \mu,\nu\in \mathscr P_p(\C),$$
see, for example, \cite{Ra}. Therefore, Lemma  \ref{ThA} is applicable to the present set-up.

With Lemma \ref{ThA} at hand, we are in the position to complete the proof of Theorem \ref{main}.

\begin{proof}[Proof of Theorem \ref{main}]
  Let $\mathcal K$ be the convex hull of $\bar{\mathcal K_0}$ in $\mathscr P_p(\C)$, so $\mathcal K$ is a convex subset of $\M_M$.
  Further, by Lemma \ref{compact},  $\bar{\mathcal K_0}$ is a compact subset of $\mathscr P_p(\C)$. Thus, \cite[Theorem 5.35]{AB} enables us to deduce that $\mathcal K$ is also compact
  in $\mathscr P_p(\C)$. So we conclude that $\mathcal K$ is a nonempty convex compact subset of $\mathscr P_p(\C)$.

By taking advantage of Lemma \ref{importantlemma}, together with $({\bf H_1})$ and  $({\bf H_2})$, it follows from \cite[Theorem 2.3]{ESV} (see also \cite[Theorem 3.1.1]{KW}) that  the (Feller) semigroup  $(P_t^{\nu})$ admits at least one IPM
  in the set $\M_M$, so the set $\Lambda_\nu$ of IPMs of  $(P_t^{\nu})$ in $\M_M$ is non-empty.
  Define the multivalued map
 $\GG: \mathcal K\to 2^{\mathcal K}$   by
\begin{equation}\label{E3}
\GG(\nu)=\LL_\nu, ~~~\nu\in\mathcal K.
\end{equation}
As long as  the map $\GG$ has a fixed point, i.e., there exists a probability measure $\mu\in\mathcal K$ such that  $\mu\in\GG(\mu)$,  the definition of $\LL_\mu$ enables us to get $$\L_{X^{\mu,\mu}_t}=\mu_t^\mu=\mu, ~~~~t\ge0,  $$
so the $\C$-valued Markov process  $(X_t)_{t\ge0}$ solving \eqref{E1} has an IPM. To complete  the proof of Theorem \ref{main}, it is sufficient to show that the
multivalued map $\Gamma$, defined in \eqref{E3}, has a fixed point.  To apply Kakutani's fixed point theorem, we need to verify
\begin{itemize}
\item[({\bf i})]$\Lambda_\nu$ is convex for each $\nu \in \mathcal K$;

\item[({\bf ii})] the graph of $\Gamma$ is closed.
\end{itemize}
The claim ({\bf i})   is clear since convex combinations of IPMs are invariant and $\mathcal K$ is convex. To show ({\bf ii}), consider  sequences $(\nu_n)_{n\ge 1}$ and $(\pi_n)_{n\ge 1}$ in $\mathcal K$ such that $\pi_n \in \Lambda_{\nu_n}$ and such that there exist $\nu$ and $\pi$ in $\mathcal K$
such that  $\lim_{n \to \infty}\W_p(\nu_n,\nu)=0$ and $\lim_{n \to \infty}\W_p(\pi_n,\pi)=0$. We have to show that $\pi \in \Lambda_{\nu}$.  To see this, for any  $t>0$, by $\pi_n \in \Lambda_{\nu_n}$, we have
 $$\int_\C f(\xi)\pi_n(\d\xi)=\int_\C P_t^{\nu_n} f(\xi)\pi_n(\d\xi), ~~~f\in {\rm Lip}_b(\C),$$
 where
 ${\rm Lip}_b(\C)$ means the set of bounded Lipschitz continuous functions on $\C$.
Thus,  one has
\begin{equation}\label{W17}
\int_\C f(\xi)\pi_n(\d\xi)=\int_\C P_t^{\nu} f(\xi)\pi_n(\d\xi)+\Upsilon_n,
\end{equation}
where
\begin{equation*}
\Upsilon_n:=\int_\C\big( P_t^{\nu_n} f(\xi)-P_t^{\nu} f(\xi)\big)\pi_n(\d\xi).
\end{equation*}
Taking advantage of  Lemma \ref{Lem},  we infer
\begin{equation*}
|\Upsilon_n|\le\|f\|_{\rm Lip} \int_\C\E\|X_t^{\dd_\xi,\nu_n}-X_t^{\dd_\xi,\nu}\|_\8\pi_n(\d\xi)\le \|f\|_{\rm Lip} \big(C t\e^{Ct}\big)^{1/p}\W_p(\nu_n,\nu).
\end{equation*}
This, together with $\W_p(\nu_n,\nu)\to0$, $\W_p(\pi_n,\pi)\to0$, and
the Feller property of $(P_t^\nu)_{t\ge0}$, allows us to conclude from \eqref{W17} that
$$\int_\C P_t^\nu f(\xi)\pi(\d\xi)=\int_\C f(\xi)\pi(\d\xi), ~~~f\in {\rm Lip}_b(\C).$$
Therefore, $\pi \in \mathscr P_p(\C)$ is an IPM  so that $\pi\in\Lambda_\nu$ and the proof of the theorem is complete.
\end{proof}

 \end{document}